\DeclarePairedDelimiter\abs{\lvert}{\rvert}%
\newcommand{\ux}{\underline{x}}
\newcommand{\ox}{\overline{x}}
\newcommand{\bx}{\boldsymbol{x}}
\newcommand{\QQ}{\mathbb{Q}}
\newcommand{\RR}{\mathbb{R}}
\newcommand{\EE}{\mathsf{E}}
\newcommand{\T}{^\mathrm{T}}
\newcommand{\fracn}{\frac{1}{n}}
\colorlet{MAGENTA}{magenta}
\renewcommand{\ln}{\log}
\date{May 10, 2022}
           \title{The NP-hard problem of computing the maximal sample variance over interval data is solvable in almost linear time with high probability}
\author{
           M. Rada \\
           Prague University of Economics and Business \\
           W.~Churchill Square~4, Prague~3 \\ Czech Republic \\
           \email{miroslav.rada@vse.cz}
          \and   
M. \v Cern\'{y} \\
               Prague University of Economics and Business \\
               W.~Churchill Square~4, Prague~3 \\ Czech Republic   \\
               \email{cernym@vse.cz}
           \and
            O. Sokol \\
               Prague University of Economics and Business \\
               W.~Churchill Square~4, Prague~3 \\ Czech Republic   \\
               \email{ ondrej.sokol@vse.cz} 
           }
\begin{abstract}
        We consider the algorithm by Ferson et al.~(\emph{Reliable computing} {\bf 11}(3), p.~207--233, 2005) designed for solving the NP-hard problem of computing the maximal sample variance over interval data, motivated by robust statistics (in fact, the formulation can be written as a 
        nonconvex quadratic program with a specific structure).
        First, we propose a new version of the algorithm improving its original time bound $O(n^2 2^\omega)$ to $O(n \log n+n\cdot 2^\omega)$,
        where $n$ is number of input data and $\omega$ is the clique number in a certain intersection graph. Then we treat input data as random variables (as it is usual in statistics) 
and introduce a natural probabilistic data generating model. We get $2^\omega = O(n^{1/\log\log n})$ 
and $\omega = O(\log n / \log\log n)$ on average. This results in average computing time $O(n^{1+\epsilon})$
for $\epsilon > 0$ arbitrarily small, which may be considered as ``surprisingly good'' average time complexity for solving an NP-hard problem. 
Moreover, we prove the following tail bound on the distribution of computation time: hard instances, forcing
the algorithm to compute in time $2^{\Omega(n)}$, occur rarely,
with probability tending to zero at the rate $e^{-n\log\log n}$.
\end{abstract}
\begin{document}

%%% Insert your article text here.
\section{Introduction and motivation}

\subsection{Problem formulation}
\cite{ferson:2005:ExactBoundsFinitea} studied the pair of optimization problems
\begin{align}
 \min_{x \in \RR^n} V(x)\quad\text{s.t.}\quad\ux \leq x \leq \ox, \label{eq:lbound}\\
\max_{x \in \RR^n} V(x)\quad\text{s.t.}\quad\ux \leq x \leq \ox, \label{eq:ubound}
\end{align}
where
and $\ux \leq \ox \in \QQ^n$ are given input data and
$$
V(x) \coloneqq \frac{1}{n}\sum_{i=1}^n \left(x_i - \frac{1}{n}\sum_{j=1}^n x_j\right)^2.
$$
It is obvious that \whole\ref{eq:lbound} is a convex quadratic program (CQP) solvable in polynomial time, while \whole\ref{eq:ubound} is easily proven to be NP-hard. It is worth noting that a general CQP solver yields a weakly polynomial algorithm for \whole\ref{eq:lbound}, but \cite{ferson:2005:ExactBoundsFinitea} introduced a strongly polynomial method. 

They also introduced a method for solving~\whole\ref{eq:ubound} which works in exponential time in the worst case (not surprisingly). 
The method will be described in \whole\ref{sect:algo}. Abbreviating the names of all authors (Ferson, Ginzburg, Kreinovich, Longpré and Aviles), we will refer to their method as \emph{FGKLA algorithm}. 

\subsection{Summary of results}
In this text we focus on the NP-hard case \whole\ref{eq:ubound} and the FGKLA algorithm.
Our contribution is twofold.
\paragraph{Improving the worst-case complexity of the FGKLA algorithm.} Firstly, we show that there exists an implementation of the FGKLA algorithm working in time 
    \begin{equation}
    O(n\log n + n \cdot 2^{\omega}), \label{eq:impbnd}    
    \end{equation}
    where $\omega$ is the 
    size of the largest clique in a certain intersection graph. The graph will be introduced in \whole\ref{def:intgraph}. This improves the bound $O(n^2 \cdot 2^{\omega})$ from the original paper. For further discussion see \whole\ref{rem:orig:impl:problem}.
    \paragraph{Proving a ``good'' behavior in a probabilistic setting.} Secondly, we treat the input data $\ux, \ox$ as random variables. We introduce a natural and fairly general probabilistic model (details are in \whole\ref{sect:probability}), under which we show 
    that 
    \begin{enumerate}[label=(\roman*)]
        \item \label{enu:props:i} on average, the algorithm works in time 
        \begin{equation}
          O(n^{1+\epsilon})\quad \text{for all}\ \epsilon > 0 \label{eq:timebnd},
        \end{equation}
        which is surprisingly good considering the problem is NP-hard,
        \item \label{enu:props:ii} the probability that the algorithm computes in time $2^{\Omega(n)}$ 
tends to zero \emph{faster than exponentially} with $n \rightarrow \infty$. In other words, we show that ``hard'' instances occur indeed rarely.
    \end{enumerate}

    More specifically: \whole\ref{enu:props:i}~we prove that under the probabilistic model it holds 
\begin{equation}
\EE2^\omega = O(n^\frac{1}{\log\log n}), 
\label{eq:avgomega}
\end{equation}
where $\EE[\cdot]$ stands for the expected value of $[\cdot]$. Combination of \whole\ref{eq:avgomega} with \whole\ref{eq:impbnd} yields \whole\ref{eq:timebnd} as 
$n\log n = O(n^{1+\epsilon})$ and
$n^\frac{1}{\log \log n} = O(n^{\epsilon})$ for any $\epsilon > 0$. In the entire text, 
``$\log$'' stands for the natural logarithm.

To achieve~\whole\ref{enu:props:ii}: from \whole\ref{eq:impbnd} it follows that the computing time is exponential when $\omega \geq \delta n$ with $\delta > 0$. We prove
that
$$
\Pr[\omega \geq \delta n] \leq e^{-n\ln \ln n}  \text{ for every}\ \delta > 0 \text{\ and a sufficiently large $n$}. 
$$

\subsection{Motivation from statistics}\label{sect:statistics}
Problems \whole\ref{eq:lbound} and \whole\ref{eq:ubound} are studied in statistics; see e.g. \cite{antoch:2010:notevariabilityinterval}
and references therein,
and a pseudopolynomial method in \cite{cerny:2014:complexitycomputationapproximationa}.
The statistical motivation is as follows: we are interested in sample variance $V(x)$ of a dataset $x = (x_1, \dots, x_n)\T$.
However, the data $x$ is not observable. What is available instead is a collection of intervals $\bx_i \coloneqq[\ux_i, \ox_i]$, $i = 1, \dots, n$,
such that $\ux_i \leq x_i \leq \ox_i$ (for example, instead of the exact values $x$ we have rounded versions only). Then, $V(x)$
cannot be computed exactly, but we can get tight bounds for $V(x)$ in the form \whole\ref{eq:lbound} and \whole\ref{eq:ubound}. 
In econometrics, this phenomenon is sometimes called \emph{partial identification}, see  \cite{manski:2003:Partialidentificationprobability}.

The problem is more general and is studied for various statistics in place of $V(x)$ in \whole\ref{eq:lbound} and \whole\ref{eq:ubound},
see the reference books 
by  \cite{kreinovich:1998:ComputationalComplexityFeasibility} and  \cite{nguyen:2012:ComputingStatisticsInterval}.

\subsection{Related work} In general, this paper contributes to the analysis of complexity of optimization problems and algorithms when input data can be assumed to be random, drawn from a particular distribution or a class of distributions. As a prominent example recall the famous average-time analysis of the Simplex Algorithm by \cite{borgwardt:1982:Averagenumberpivot} and \cite{spielman:2004:Smoothedanalysisalgorithms}, where the phenomenon ``exponential in the worst case but fast on average'' has been studied since 1980's.

The phenomenon is particularly interesting for NP-hard problems since the worst-case exponential time seems to be unavoidable. In the area of quadratic optimization, the simplex-constrained case has been studied by several authors (see e.g.  \cite{bomze:2017:ComplexitySimpleModels} and references therein). It turns out that a quadratic form with entries randomly generated from ``natural'' distributions attains, with a high probability, its global maximum in a face of a small dimension. This property implies that the problem can be solved efficiently by enumerating faces. 

Another nice example is the analysis of the average-case complexity of an NP-hard variant of the open shop scheduling problem by \cite{lu:1993:NPHardOpenShop}. Their setup is similar to ours: they assume that input data (the job processing times) are generated from a certain class of probabilistic distributions and prove that the average complexity is polynomial in the number of jobs.

Finally, we mention the average-case complexity analysis of the NP-hard \textsc{$k$-clique} problem.  \cite{rossman:2014:MonotoneComplexitykClique} derived bounds on average-case complexity on monotone circuits. \cite{fountoulakis:2015:averagecasecomplexityparameterized} then extended the analysis into a probabilistic setting, showing whether the ``hard'' instances occur frequently or rarely. Their results are, in a sense, analogous to ours: if the edges are sampled from a ``natural'' distribution, then \textsc{$k$-clique} can be solved in polynomial time with probability tending to one faster than polynomially.

Aside from the above mentioned general average-case results, the \emph{weak average-case complexity} paradigm by \cite{ame} is worth mentioning; here, the word ``weak'' refers to omitting a ``small'' subset of hard instances from the set of all possible instances and performing the average-case complexity analysis on such a reduced class of instances. It turns out that for many interesting problems the removal of a subset with exponentially small measure is sufficient to making hard problems tractable.

\section{FGKLA Algorithm} \label{sect:algo}
Recall that the input instance is given by the pair $\ux = (\ux_1, \dots, \ux_n)\T$
and $\ox = (\ox_1, \dots, \ox_n)\T$. 
Compact intervals will be denoted in boldface, e.g.~$\bx_i = [\ux_i, \ox_i]$. 
For $i = 1, \dots, n$ define
$$
x^*_i \coloneqq \tfrac{1}{2}(\ux_i + \ox_i),\quad x^{\Delta}_i \coloneqq \tfrac{1}{2}(\ox_i - \ux_i),
\quad \bx_i^{1/n} \coloneqq [x^*_i - \tfrac{1}{n}x_i^{\Delta}, x^*_i + \tfrac{1}{n}x_i^{\Delta}].
$$
The numbers $x_i^*, x_i^{\Delta}$ are referred to as \emph{center} and \emph{radius} of $\bx_i$, respectively, and $\bx_i^{1/n}$ is called
a \emph{narrowed interval} (i.e., $\bx_i$ shrunk by factor $n$ around its center). 
For $x \in \RR^n$ we define $\mu[x] \coloneqq \frac{1}{n}\sum_{i=1}^n x_i$
(the \emph{mean} of $x$).

Our version of the FGKLA algorithm is summarized as Algorithm \whole\ref{alg:fgkla}.
The main result of this section is \whole\ref{lem:fcplx}. In particular, it improves the 
worst-case complexity bound $O(n^2\cdot 2^\omega)$
from~\cite{ferson:2005:ExactBoundsFinitea} (see also \whole\ref{rem:orig:impl:problem}). 
The proof of \whole\ref{lem:fcplx} will be given in \whole\ref{ssect:idea}.

\begin{theorem}[properties of the FGKLA algorithm (Algorithm \whole\ref{alg:fgkla})]
\begin{itemize}
    \item[(a)] The FGKLA algorithm correctly solves \whole\ref{eq:ubound}.
    \item[(b)] Let $G = (V = \{1, \dots, n\}, E)$ be an undirected graph where $\{i, j\} \in E$ if and only if $\bx^{1/n}_i \cap \bx^{1/n}_j\neq \emptyset$ (here, $i \neq j$). Let $\omega$ be the size of the largest clique in $G$. Then, FGKLA algorithm works in time $O(n \log n +  n \cdot 2^{\omega})$.
\label{lem:fcplx}
\end{itemize}
\end{theorem}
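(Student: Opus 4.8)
The plan is to first understand the FGKLA algorithm well enough to see where the exponential blow-up comes from, and then bound that blow-up by $2^\omega$. From the problem structure I expect the algorithm to work as follows: it is a classical fact (going back to the original paper) that at an optimal solution of~\cref{eq:ubound} each coordinate $x_i$ sits at an endpoint $\ux_i$ or $\ox_i$, \emph{except} possibly for those $i$ whose narrowed interval $\bx_i^{1/n}$ contains the optimal mean $\mu^\star$, for which $x_i = x_i^\star$ is forced. Hence the optimal solution is determined once we know (i) which coordinates are ``low'', which are ``high'', and which are ``central'', and (ii) this labelling is consistent with a threshold rule driven by the value of $\mu[x]$. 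So the algorithm should sort the relevant breakpoints (the endpoints of the narrowed intervals), sweep through the $O(n)$ candidate values of the mean, and for each candidate evaluate $V$ on the induced assignment. The $n\log n$ term is the sort; the cost per candidate mean is where the $2^\omega$ enters.

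The first step I would carry out is therefore to state and prove the structural optimality lemma precisely: characterize optimal solutions of~\cref{eq:ubound} in terms of a threshold $\mu^\star$ and the narrowed intervals $\bx_i^{1/n}$, isolating the set of ``ambiguous'' indices $i$ with $\mu^\star \in \bx_i^{1/n}$. The second step is to show that, as the sweep variable ranges over an interval between two consecutive sorted breakpoints, the set of coordinates whose optimal value is \emph{not} yet pinned down to an endpoint is exactly a set of mutually overlapping narrowed intervals — i.e.\ a clique in $G$ — and hence has size at most $\omega$. Enumerating all $2^{\le\omega}$ sign patterns on this ambiguous set, evaluating $V$ for each, and taking the max, costs $O(2^\omega)$ per breakpoint interval after the right incremental bookkeeping; since there are $O(n)$ breakpoint intervals, part~(b) follows. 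Correctness, part~(a), then reduces to: the true optimum's mean lies in one of these breakpoint intervals, and within that interval our enumeration includes the optimal sign pattern — which is immediate from the structural lemma.

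The incremental bookkeeping deserves care: naively, recomputing $V(x) = \frac1n\sum (x_i - \mu[x])^2$ from scratch at each of the $O(n)$ sweep steps for each of the $2^\omega$ patterns would give $O(n^2 2^\omega)$ — exactly the original paper's bound. The point of the improvement to~\cref{eq:impbnd} is to maintain running sums $\sum x_i$ and $\sum x_i^2$ (from which $V$ is an $O(1)$ update) and to update them in $O(1)$ amortized time as a single coordinate crosses a breakpoint, so that the total work outside the enumeration is $O(n\log n)$ and the enumeration contributes $n\cdot 2^\omega$. I would make this precise by describing the data maintained between consecutive breakpoints and checking each transition touches only $O(1)$ coordinates.

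The main obstacle I anticipate is the second step — proving that the ambiguous set is genuinely a clique in $G$, not merely a set of intervals each meeting the sweep point. One must argue that during the relevant portion of the sweep the candidate mean stays inside every ambiguous interval simultaneously, so that all those narrowed intervals share a common point and are therefore pairwise intersecting; this requires pinning down exactly how the sweep is organized (e.g.\ that consecutive candidate means are the sorted endpoints of the $\bx_i^{1/n}$) and handling boundary/tie cases where an interval's endpoint coincides with the sweep value. A secondary subtlety is verifying that the claimed $O(1)$-per-transition updates are actually achievable — in particular that switching a coordinate from ``central'' to an endpoint, or vice versa, changes the running sums in a way computable without rescanning — which is routine but must be stated carefully to justify the $n\log n$ term rather than $n^2$.
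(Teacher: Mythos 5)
Your overall route is the paper's: sort the $O(n)$ endpoints of the narrowed intervals, sweep, bound the set of undetermined indices at each sweep point by a clique in $G$, and enumerate $2^{|L|}$ candidates with $O(1)$-per-step incremental updates of $\sum x_i$ and $\sum x_i^2$. But your structural lemma — the heart of part (a) — is stated incorrectly: you claim that for indices with $\mu^\star \in \bx_i^{1/n}$ the value $x_i = x_i^\star$ is \emph{forced}. That is false for the maximization problem \cref{eq:ubound}: since $V$ is a positive semidefinite quadratic, its maximum over the box is attained at a vertex, so \emph{every} coordinate sits at an endpoint; the indices with $\mu^\star \in \bx_i^{1/n}$ are merely those for which the choice of endpoint is \emph{undetermined} (you appear to be importing the structure of the minimization problem \cref{eq:lbound}, where clamping to the mean is correct). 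Note that your statement is also inconsistent with your own later step of ``enumerating all $2^{\le\omega}$ sign patterns on this ambiguous set'': if those coordinates were pinned to their centers there would be nothing to enumerate and the $2^\omega$ factor would vanish. The correct lemma (the paper's exchange argument) is: if $\mu[x] > x_i^* + \tfrac1n x_i^\Delta$ then moving $x_i$ from $\ox_i$ to $\ux_i$ strictly increases $V$, and symmetrically; hence at an optimum the coordinates outside the ambiguous set are forced to a specific endpoint determined by which side of $\bx_i^{1/n}$ the mean lies on (note the direction: mean \emph{above} the narrowed interval forces the \emph{lower} endpoint), and the ambiguous ones must be enumerated over both endpoints.

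Two smaller points. The obstacle you anticipate — that the ambiguous set might not be a clique — dissolves if, as in the paper, the sweep visits the sorted endpoints $a_1<\cdots<a_m$ themselves and takes $L = \{i : a_k \in \bx_i^{1/n}\}$: all these intervals share the point $a_k$, so they are pairwise intersecting and $|L|\le\omega$ immediately. And for the $O(2^{|L|})$ cost per sweep point you need the enumeration of endpoint patterns to be ordered so that consecutive patterns differ in one coordinate (a Gray-code traversal), and to return to its initial state so the running sums remain consistent across sweep points; your ``right incremental bookkeeping'' gestures at this but it is the step that actually converts the original $O(n^2\cdot 2^\omega)$ into $O(n\log n + n\cdot 2^\omega)$, so it should be made explicit.
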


\begin{definition}\label{def:intgraph}
The graph $G$ from \whole\ref{lem:fcplx} is referred to as \emph{FGKLA intersection graph with data $\bx_1, \dots, \bx_n$}.
\end{definition}

\subsection{Idea of the FGKLA algorithm}\label{ssect:idea}
Since the quadratic form $V(x)$ is positive semidefinite, the 
maximum of \whole\ref{eq:ubound} is attained in a vertex (an extremal point) 
of the feasible set 
$$
\bx = \{x\mid \underline{x} \le x \le \overline{x}\}.
$$ 
There are $2^n$ vertices in total. In a vertex $x$
we have 
$x_i \in \{\ux_i = x^*_i-x^\Delta_i,\ \ox_i = x_i^*+x_i^\Delta\}$ for every $i$. 
FGKLA algorithm reduces the number of vertices to be examined from $2^n$ to 
$O(n\cdot 2^\omega)$. The reduction is based on \whole\ref{lem:fixing}. A similar lemma was used in the original paper \cite{ferson:2005:ExactBoundsFinitea}.

\begin{lemma}\label{lem:fixing}
    Let $x, x' \in \bx$ and let there exist $i \in \{1,\ldots,n\}$ such that \begin{enumerate}
\item $x_j = x'_j$ for all $j \neq i$ and 
\item\label{enu:ass:2} one of the following is satisfied:
    \begin{enumerate}
        \item\label{enu:ass:a} $x_i = \ox_i$, $\mu[x] > x_i^* + \frac{1}{n}x_i^\Delta$ and $x'_i = \ux_i$,
        \item\label{enu:ass:b} $x_i = \ux_i$, $\mu[x] < x_i^* - \frac{1}{n}x_i^\Delta$ and $x'_i = \ox_i$.
    \end{enumerate}
    \end{enumerate}
    Then $V(x) < V(x')$.
\end{lemma}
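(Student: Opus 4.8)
The plan is to reduce everything to a single-variable computation by freezing every coordinate except the $i$-th. Since $x_j = x'_j$ for all $j \ne i$, the quantity $S := \sum_{j \ne i} x_j = \sum_{j\ne i} x'_j$ is common to $x$ and $x'$. Using the elementary identity $nV(y) = \sum_{k=1}^n y_k^2 - \frac1n\big(\sum_{k=1}^n y_k\big)^2$, I would introduce, for $t \in \RR$,
\[
 f(t) \;:=\; \Big(\sum_{j\ne i} x_j^2\Big) + t^2 - \tfrac1n\,(S+t)^2 ,
\]
so that $nV(y) = f(y_i)$ for every $y$ with $y_j = x_j$ $(j \ne i)$. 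This $f$ is a convex parabola (leading coefficient $\frac{n-1}{n} > 0$; one may assume $n \ge 2$, the hypotheses being unsatisfiable for $n = 1$), with vertex $t^\star = S/(n-1)$, and in both cases of the lemma $\{V(x),V(x')\} = \{\tfrac1n f(\ux_i),\,\tfrac1n f(\ox_i)\}$. Hence the lemma reduces to comparing the parabola at the two endpoints $\ux_i, \ox_i$ of the $i$-th interval.

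Next I would compute that difference directly. Factoring out $\ux_i - \ox_i = -2x_i^\Delta$ and using $\ux_i + \ox_i = 2x_i^*$,
\[
 f(\ux_i) - f(\ox_i) \;=\; (\ux_i - \ox_i)\Big[\tfrac{n-1}{n}(\ux_i + \ox_i) - \tfrac{2}{n}S\Big] \;=\; \tfrac{4x_i^\Delta}{n}\,\big(S - (n-1)x_i^*\big).
\]
Geometrically this just records that between two points symmetric about the interval center $x_i^*$, the convex parabola $f$ is larger at the one lying farther from its vertex $t^\star$, so the sign of $V(x') - V(x)$ is decided by whether $x_i^*$ is below or above $t^\star$.

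The final step is to feed in the second hypothesis. In case~(a) we have $x_i = \ox_i$, so $\mu[x] = \frac1n(S + \ox_i) = \frac1n\big(S + x_i^* + x_i^\Delta\big)$, and $\mu[x] > x_i^* + \frac1n x_i^\Delta$ rearranges (multiply by $n$, cancel $x_i^\Delta$) to $S > (n-1)x_i^*$; since $x'_i = \ux_i$, the displayed identity gives $V(x') - V(x) = \frac1n\big(f(\ux_i) - f(\ox_i)\big) = \frac{4x_i^\Delta}{n^2}\big(S - (n-1)x_i^*\big) > 0$. Case~(b) is the mirror image: $\mu[x] < x_i^* - \frac1n x_i^\Delta$ rearranges to $S < (n-1)x_i^*$, while now $V(x) = \frac1n f(\ux_i)$ and $V(x') = \frac1n f(\ox_i)$, so $V(x') - V(x) = \frac{4x_i^\Delta}{n^2}\big((n-1)x_i^* - S\big) > 0$.

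The only point needing care --- the ``obstacle'', slight as it is --- is strictness: the chain yields a \emph{strict} inequality precisely because $x_i^\Delta > 0$, which may be taken for granted since otherwise $\ux_i = \ox_i$, the vectors $x$ and $x'$ coincide, and there is nothing to prove. Everything else is routine manipulation of the variance identity, and the cleanest write-up keeps the algebra at the level of the single parabola $f$ rather than expanding $V(x)$ and $V(x')$ separately.
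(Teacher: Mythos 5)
Your proof is correct and is essentially the paper's own argument in different packaging: the identity $V(x')-V(x)=\frac{4x_i^\Delta}{n^2}\bigl(S-(n-1)x_i^*\bigr)$ that you extract from the one-variable parabola $f$ is exactly the fourth displayed line of the paper's direct expansion of $V(x)-V(x')$, and the final translation of the hypothesis on $\mu[x]$ into the sign of $S-(n-1)x_i^*$ is the same computation. Your closing remark on strictness is apt --- when $x_i^\Delta=0$ the hypotheses can still hold while $x=x'$, so the strict conclusion genuinely fails in that degenerate case, a point the paper's proof silently glosses over in the same way (its last step also multiplies a strict inequality by $\frac{4}{n}x_i^\Delta\geq 0$) --- but this is harmless for the lemma's use in \cref{cor:fixing}.
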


\begin{proof}
We prove the claim for Assumption \ref{enu:ass:a}, i.e. $x_i = \ox_i, \mu[x] > x_i^*+\frac{1}{n}x_i^\Delta, x'_i = \ux_i$. The proof is analogous for \ref{enu:ass:b}.

    Let $J = \{1,\ldots,n\} \setminus \{i\}$.
    We want to prove $V(x) - V(x') < 0$. We have
    \begin{align*}
        &V(x) - V(x')\\
        &\quad= \fracn \Bigg(\ox_i^2 + \sum_{j \in J} x_j^2 - \ux_i^2 - \sum_{j \in J}x_j^2 \\&\qquad \qquad - \fracn\bigg( \Big(\ox_i + \sum_{j \in J} x_j \Big)^2 - \Big( \ux_i + \sum_{j \in J} x_j   \Big)^2  \bigg) \Bigg) \\
        &\quad= \fracn \bigg( \ox_i^2-\ux_i^2 - \fracn \Big( \ox_i^2 - \ux_i^2 + 2(\ox_i-\ux_i)\sum_{j\in J} x_j \Big) \bigg)\\
        &\quad= \fracn \bigg( 4x^*_ix_i^\Delta - \fracn(4 x^*_ix^\Delta_i) - \Big(4x^\Delta_i \sum_{j\in J} x_j \Big)\bigg)\\
        &\quad= \frac{4}{n} x^\Delta_i \bigg( x^*_i - \fracn x^*_i - \fracn\Big(-\ox_i + \sum_{j\in\{1,\ldots,n\}} x_j\Big) \bigg)\\
        &\quad <  \frac{4}{n} x^\Delta_i\left( x^*_i -\fracn x^*_i+\fracn x^*_i+\fracn x^\Delta_i-x^*_i-\fracn x^\Delta_i \right)
        = 0. {\qed}
    \end{align*}
\end{proof}

\begin{corollary}
    \label{cor:fixing}
    Let $x' \in \bx$ be a maximizer and $\mu' = \mu[x']$.
    Let $X$ be the set of all vectors $x \in \bx$ satisfying:
\begin{itemize}
\item[(a)] 
    $x_i = \ux_i$ if $\mu' > x_i^*+\frac{1}{n} x_i^\Delta$,
\item[(b)] 
    $x_i = \ox_i$ if $\mu' < x_i^*-\frac{1}{n} x_i^\Delta$, and
\item[(c)] 
    $x_i \in \{\ux_i, \ox_i\}$ if $\mu' \in \bx_i^{1/n}$.
\end{itemize}
Then $X$ contains a maximizer.
\end{corollary}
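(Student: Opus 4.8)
The plan is to combine Lemma~\ref{lem:fixing} with the optimality of $x'$ to show that $x'$ itself can be assumed to satisfy the constraints defining $X$, hence $X$ contains a maximizer (namely a suitable such $x'$).

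First I would argue by contradiction and a coordinate-correction argument. Suppose $x'$ is a maximizer with mean $\mu' = \mu[x']$, but $x'$ does \emph{not} satisfy condition~(a): there is an index $i$ with $\mu' > x_i^* + \tfrac{1}{n}x_i^\Delta$ while $x'_i \neq \ux_i$. Since $x'$ is a vertex (or, more precisely, since the maximum of the positive semidefinite form $V$ over the box $\bx$ is attained at a vertex, as recalled in Section~\ref{ssect:idea}, we may assume $x'$ is a vertex), $x'_i \in \{\ux_i, \ox_i\}$, so $x'_i = \ox_i$. Now form $x''$ by flipping coordinate $i$ to $\ux_i$ and leaving all other coordinates unchanged. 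The key point is that Lemma~\ref{lem:fixing}, Case~\ref{enu:ass:a}, applies with the roles $x \leftarrow x'$, $x' \leftarrow x''$: indeed $x'_j = x''_j$ for $j \neq i$, $x'_i = \ox_i$, and the hypothesis $\mu[x'] > x_i^* + \tfrac{1}{n}x_i^\Delta$ is exactly our assumption. Lemma~\ref{lem:fixing} then gives $V(x') < V(x'')$, contradicting maximality of $x'$. The symmetric argument with Case~\ref{enu:ass:b} handles a violation of condition~(b).

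Next I would observe that conditions (a), (b), (c) are exhaustive and mutually compatible: for each $i$, exactly one of the three situations $\mu' > x_i^* + \tfrac1n x_i^\Delta$, $\mu' < x_i^* - \tfrac1n x_i^\Delta$, $\mu' \in \bx_i^{1/n}$ holds (the last being the closed interval $[x_i^* - \tfrac1n x_i^\Delta, x_i^* + \tfrac1n x_i^\Delta]$, so the trichotomy is a genuine partition up to the endpoints, where (c) overlaps harmlessly with (a) or (b) since $x'_i \in \{\ux_i,\ox_i\}$ is then vacuously allowed). Having shown above that a vertex maximizer $x'$ must satisfy (a) for every $i$ with $\mu' > x_i^* + \tfrac1n x_i^\Delta$ and (b) for every $i$ with $\mu' < x_i^* - \tfrac1n x_i^\Delta$, and since at a vertex $x'_i \in \{\ux_i, \ox_i\}$ automatically fulfils (c), the vertex maximizer $x'$ lies in $X$. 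Hence $X$ contains a maximizer.

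The only subtlety — and the one place I would be careful — is the edge case $\mu' = x_i^* \pm \tfrac1n x_i^\Delta$ exactly: there condition (c) applies and imposes no restriction, so there is nothing to prove, and one should just note that Lemma~\ref{lem:fixing} requires \emph{strict} inequality, consistent with this. I do not expect any real obstacle here; the corollary is essentially a repackaging of Lemma~\ref{lem:fixing} together with the standard fact that $V$ attains its box-maximum at a vertex, and the one-line verification that the three cases partition the possibilities for each coordinate.
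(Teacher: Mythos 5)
Your argument is correct and is exactly the reasoning the paper intends: the corollary is stated without proof as an immediate consequence of Lemma~\ref{lem:fixing}, namely that a vertex maximizer violating (a) or (b) could be strictly improved by flipping the offending coordinate, contradicting optimality, while (c) is automatic at a vertex. Your handling of the boundary cases $\mu' = x_i^* \pm \tfrac{1}{n}x_i^\Delta$ is also consistent with the strict inequalities in the lemma, so there is nothing to add.
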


In cases (a) and (b) we say that variable $x_i$ (or index~$i$) is \emph{determinable} with respect to~$\mu'$;
in case (c), variable $x_i$ (or index~$i$) is \emph{free} with respect to $\mu'$.

\begin{algorithm}[t]
    \begin{algorithmic}[1]
        \normalsize
        \REQUIRE{$\ux = x^* - x^{\Delta} \in \mathbb{Q}^n,\ \ox = x^* + x^{\Delta} \in \mathbb{Q}^n$ s.t. $\ux \leq \ox$}
        \STATE \label{alg:a:construction}$A \coloneqq \{x_i^* + \frac{s_i}{n} x_i^\Delta \in [\mu[\ux],\mu[\ox]] \mid s_i\in\{\pm 1\},\ i=1,\dots,n \}$; $m \coloneqq \abs{A}$
        \STATE \label{alg:sorting}sort $A$ and denote its elements by $a_1 < \cdots < a_{m}$
        \STATE \textbf{for} $k\in \{1,\dots,m\}$ \textbf{do} $B_{a_k} \coloneqq \emptyset$; $E_{a_k} \coloneqq \emptyset$
        \STATE \textbf{for} $i\in \{1,\dots,n\}$ \textbf{do} add $i$ to both $B_{x^*_i-\frac{1}{n}x_i^\Delta}$ and $E_{x^*_i+\frac{1}{n}x_i^\Delta}$ 
        \STATE \label{alg:V:initial}$V_1 \coloneqq \mu[(\ox_1^2,\ldots,\ox_n^2)]$; $V_2 \coloneqq \mu[\ox]$; $M \coloneqq V_1 - V_2^2$; $L \coloneqq \emptyset$
        \FOR{$k \in \{1,\dots,m\}$}\label{alg:f:for:start}
        \STATE \label{alg:B:processing}\textbf{for} {$i \in B_{a_k}$} \textbf{do} $L \coloneqq L \cup \{i\}$
        \STATE\label{alg:L:processing}examine all $2^{\abs{L}}$ vertices with Algorithm \whole\ref{alg:traversal}
        \STATE \label{alg:E:processing}\textbf{for} {$i \in E_{a_k}$} \textbf{do} $L \coloneqq L \setminus \{i\}$; $V_1 \coloneqq V_1 + \frac{1}{n}(\ux_i)^2 - \frac{1}{n}(\ox_i)^2 $; $V_2 \coloneqq V_2 - \frac{2}{n} x_i^\Delta$
        \ENDFOR\label{alg:f:for:end}
        \RETURN $M$
    \end{algorithmic}
    \caption{FGKLA algorithm}
    \label{alg:fgkla}
\end{algorithm}

\begin{algorithm}[t]
    \begin{algorithmic}[1]
        \normalsize
        \setcounter{ALC@unique}{0}
        \REQUIRE{list $L$ of free indices (variables $V_1, V_2$ are global)}
        \STATE $z\coloneqq(0,\ldots,0)\in \{0,1\}^{\abs{L}}$; $s\coloneqq(1,\ldots,1) \in \{\pm 1\}^{\abs{L}}$; $c \coloneqq 0$
        \WHILE{$c < 2^{\abs{L}}$}\label{alg:while:start}
        \FOR{$i \in \{1,\ldots,\abs{L}\}$}\label{alg:for:start}
        \STATE \textbf{if} $z_i = 0$ \textbf{then goto} Line \whole\ref{alg:set:zi}
        \STATE $z_i \coloneqq 0$
        \ENDFOR\label{alg:for:end}
        \STATE\label{alg:set:zi} $z_i \coloneqq 1$; $s_i \coloneqq -s_i$ \qquad{\footnotesize ($i$ is the value with which \textbf{for} cycle \whole\ref{alg:for:start}--\whole\ref{alg:for:end} ends)}
        \STATE\label{alg:update} $V_1 \coloneqq V_1 + \frac{1}{n}(x_{L_i}^*+s_i x_{L_i}^\Delta)^2 - \frac{1}{n}(x_{L_i}^*-s_i x_{L_i}^\Delta)^2 $; $V_2 \coloneqq V_2 + \frac{2}{n}s_i x_{L_i}^\Delta$
        \STATE\label{alg:test}\textbf{if} $V_1-V_2^2 > M$ \textbf{then} $M \coloneqq V_1 - V_2^2$
        \STATE{$c \coloneqq c+1$}
        \ENDWHILE\label{alg:while:end}
    \end{algorithmic}
    \caption{Examining vertices corresponding to free indices in~$L$}
    \label{alg:traversal}
\end{algorithm}

Algorithm \whole\ref{alg:fgkla} works as follows. It builds the set $A$ (Line \whole\ref{alg:a:construction}) 
containing all endpoints $x_i^{*} \pm \frac{1}{n}x_i^{\Delta}$ of the narrowed intervals $\bx_i^{1/n}$, $i = 1, \dots, n$ (here, $A$ acts as a set rather than a list, meaning that possible duplicities are removed), sorts them and denotes them by $a_1<\cdots<a_m$ (Line \whole\ref{alg:sorting}).

Consider the set $\{\mu[x] \mid x \in \bx\} = [\mu[\ux],\mu[\ox]]$ of all possible means. 
The endpoints from $A$ divide the interval $[\mu[\ux],\mu[\ox]]$ into at most $2n+1$ regions 
$$[a_0\coloneqq\mu[\ux],a_1),(a_1,a_2),\ldots,(a_{m-1},a_m),(a_m,a_{m+1}\coloneqq\mu[\ox]]. 
$$
Thanks to  \whole\ref{lem:fixing} and \whole\ref{cor:fixing}, every region $(a_k, a_{k+1})$
contains means $\mu$ with the same set of free indices.
For a region $(a_k, a_{k+1})$, we denote this set by $I(a_k,a_{k+1})$, 
i.e.~$I(a_k,a_{k+1}) \coloneqq \{i \in \{1, \dots, n\}\mid \bx_i^{1/n} \cap (a_k, a_{k+1}) \neq \emptyset\}$. The set $A$ of endpoints contains \emph{the worst possible} mean values with respect to the number of free indices. More precisely: for every $a_k$, $k = 1, \dots, m$, 
all indices from $I(a_{k-1},a_k) \cup I(a_k,a_{k+1})$ are free. 

  On Line \whole\ref{alg:V:initial}, we examine the vertex $\ox$. The value of $V(\ox)$ is computed and stored as $M$, the maximal value of $V$ found so far. Variables $V_1$ and $V_2$ will be useful in Line  \whole\ref{alg:traversal}.

  Then, Algorithm  \whole\ref{alg:fgkla} takes means $a_1, \ldots, a_m$ one by one. For every mean, say $a_k$, it takes the set $B_{a_k} = \{i\mid x_i^{*}-\frac{1}{n}x_i^{\Delta} = a_k\}$ of indices of narrowed intervals beginning in $a_k$ and inserts it to the set $L$ of free indices with respect to $a_k$ (Line  \whole\ref{alg:B:processing}). Indices $\{1, \dots, n\} \setminus L$ are determinable with respect to $a_k$. This yields $2^{\abs{L}}$ candidate vertices that are examined by Algorithm \whole\ref{alg:traversal}, called on Line  \whole\ref{alg:L:processing}.

Then, indices from the set $E_{a_k} = \{i\mid x_i^{*}+\frac{1}{n}x_i^{\Delta} = a_k\}$ of narrowed intervals ending in $a_k$ are removed from $L$. Intervals with these indices will be fixed to the lower endpoint for every upcoming $k'>k$ 
(Line  \whole\ref{alg:E:processing} of~Algorithm \whole\ref{alg:fgkla}). The update of $V_1$ and $V_2$ will be explained later. 

Algorithm \whole\ref{alg:traversal} consecutively traverses all $2^{\abs{L}}$ vertices of $\bx$ resulting from fixing either $x_i = \ux_i$ or $x_i = \ox_i$ for the free indices $i \in L$. 
For every such vertex, say $x$, the variance $V(x)$ is computed. To make these computations cheap, the traversal of $L$ 
is performed in a way that \emph{two successive vertices $x, x'$ differ in just one component}. 
Then  \whole\ref{pro:variance:updates} shows how to get $V(x')$ from $V(x)$ with $O(1)$ arithmetic operations. The variance is stored indirectly as variables $V_1$ and $V_2$; they can be easily updated when $\ux_i$ is switched to $\ox_i$, or vice versa. 
\begin{lemma}
    \label{pro:variance:updates}
For $x \in \RR^n$, we have $V(x) = V_1 - V_2^2$, where 
$V_1 = \mu[(x_1^2,\ldots,x_n^2)]$ and $V_2 = \mu[x]$. 
Furthermore, if $x'$ differs from $x$ in just one component, say $i$th, then 
$$\textstyle V(x') = V_1 + \frac{1}{n}((x')^2_i - x^2_i)- (V_2 + \frac{1}{n}(x'_i - x_i))^2.$$
\end{lemma}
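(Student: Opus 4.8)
The plan is to establish the two claims in sequence, both by direct algebraic manipulation. For the first identity, I would start from the definition $V(x) = \frac{1}{n}\sum_{i=1}^n (x_i - \mu[x])^2$, expand the square as $x_i^2 - 2x_i\mu[x] + \mu[x]^2$, and distribute the sum over the three terms. The cross term contributes $-2\mu[x]\cdot\frac{1}{n}\sum_{i=1}^n x_i = -2\mu[x]^2$, and the constant term contributes $\frac{1}{n}\cdot n\cdot\mu[x]^2 = \mu[x]^2$, so the expression collapses to $\frac{1}{n}\sum_{i=1}^n x_i^2 - \mu[x]^2 = V_1 - V_2^2$. This is the textbook ``mean of squares minus square of the mean'' identity.

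For the update formula, the key observation is that changing a single coordinate changes each of $\mu[x]$ and $\mu[(x_1^2,\ldots,x_n^2)]$ additively and locally. Writing $J = \{1,\dots,n\}\setminus\{i\}$ and using $x'_j = x_j$ for all $j \in J$, I get $\mu[x'] = \frac{1}{n}\bigl(x'_i + \sum_{j\in J} x_j\bigr) = \mu[x] + \frac{1}{n}(x'_i - x_i) = V_2 + \frac{1}{n}(x'_i - x_i)$, and likewise $\mu[((x'_1)^2,\dots,(x'_n)^2)] = V_1 + \frac{1}{n}((x'_i)^2 - x_i^2)$. Then I apply the first part of the lemma with $x'$ in place of $x$, namely $V(x') = \mu[((x'_1)^2,\dots,(x'_n)^2)] - \mu[x']^2$, and substitute the two expressions just derived. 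This yields exactly the claimed formula.

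There is essentially no hard step here; the statement is a routine computation. The only thing to be careful about is bookkeeping: keeping straight which quantities refer to $x$ and which to $x'$, and noting that the single-coordinate change affects only the $i$th summand in both $\sum_j x_j$ and $\sum_j x_j^2$. It is worth remarking, for the algorithmic use in \cref{alg:traversal}, that once $V_1$ and $V_2$ are stored and the changed coordinate $i$ is known, every term on the right-hand side is computable in $O(1)$ arithmetic operations; this is precisely why the variance is maintained in the factored form $V_1 - V_2^2$ rather than recomputed from scratch at each of the $2^{|L|}$ vertices.
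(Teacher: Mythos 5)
Your proof is correct and is exactly the routine computation the paper leaves implicit (the lemma is stated without proof there): the ``mean of squares minus square of the mean'' identity for the first part, and the $O(1)$ local update of $V_1$ and $V_2$ combined with that identity for the second. Nothing is missing.
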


Algorithm \whole\ref{alg:traversal} is an adaptation of the algorithm from \citet[p. 37]{rohn:2006:Solvabilitysystemsinterval} for enumeration of elements of the set $\{\pm 1\}^\ell$ for a given $\ell$. The enumeration can in general start from an arbitrary element. The proof of correctness can be found therein. In our variant, the variable $s \in \{\pm 1\}^{\abs{L}}$ indicates the current vertex. 
In every iteration of \textbf{while} cycle, some $s_i$ is set to $-s_i$. The $i$th index $L_i$ is taken from $L$ (here we consider $L$ as a list rather than a set) and $x_{L_i}$ is switched to the other endpoint. 
For this new vertex, $V_1$ and $V_2$ are updated (Line \whole\ref{alg:update}) and the resulting variance $V$ is compared to the best value found so far (Line \whole\ref{alg:test}).

The following property of Algorithm~\whole\ref{alg:traversal} is crucial for the correctness and complexity of FGKLA algorithm: 
\emph{When Algorithm \whole\ref{alg:traversal} ends, then $s = (1, \dots, 1)$.} The next proposition immediately follows.

\begin{lemma}
Let $V_1^*, V_2^*$ be the values of the global variables $V_1, V_2$ when Algorithm \whole\ref{alg:traversal} starts
and let $V_1^{**}, V_2^{**}$ be the values of $V_1, V_2$ when Algorithm \whole\ref{alg:traversal} ends.
Then $V_1^* = V_1^{**}$ and $V_2^* = V_2^{**}$.
\end{lemma}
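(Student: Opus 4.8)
The plan is to track the state of the global variables $s$, $V_1$, and $V_2$ through a single call of \cref{alg:traversal} and show they return to their initial values. The key observation, already highlighted in the text preceding the statement, is that \cref{alg:traversal} is an enumeration of $\{\pm 1\}^{|L|}$ (adapted from Rohn) which, when started from the all-ones vector, terminates with $s = (1,\dots,1)$ again. So the first step is to invoke this property of the enumeration: at the start $s = (1,\dots,1)$, and at the end $s = (1,\dots,1)$. Hence, over the whole run, each coordinate $s_i$ is flipped an \emph{even} number of times (it starts and ends at $+1$, and each iteration flips exactly one coordinate).

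Next I would express the cumulative effect of the updates on \cref{alg:update} in terms of these flips. Each time coordinate $i\in\{1,\dots,|L|\}$ is flipped, the variable $V_2$ changes by $\frac{2}{n}s_i x_{L_i}^\Delta$ where $s_i$ is the \emph{new} value of the sign after the flip; consecutive flips of the same coordinate $i$ therefore contribute $+\frac{2}{n}x_{L_i}^\Delta$ and $-\frac{2}{n}x_{L_i}^\Delta$ (in some order), which cancel. Since each coordinate is flipped an even number of times, pairing up successive flips of each fixed coordinate shows the net change in $V_2$ is $0$, i.e. $V_2^{**} = V_2^{*}$. The same pairing argument applies to $V_1$: a flip of coordinate $i$ replaces the term $\frac{1}{n}(x_{L_i}^* - s_i x_{L_i}^\Delta)^2$ by $\frac{1}{n}(x_{L_i}^* + s_i x_{L_i}^\Delta)^2$ (with $s_i$ the post-flip sign), and two successive flips of the same coordinate undo each other, so $V_1^{**} = V_1^{*}$.

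Alternatively, and perhaps cleaner to write, I would argue via \cref{pro:variance:updates}: throughout \cref{alg:traversal} the invariant $V_1 = \mu[(x_1^2,\dots,x_n^2)]$ and $V_2 = \mu[x]$ is maintained for the current candidate vertex $x$, where $x$ is determined by the fixed indices (unchanged during the call) together with the signs $s$ on the free indices $L$. Since the vertex $x$ at the start of the call (with $s = (1,\dots,1)$) coincides with the vertex $x$ at the end of the call (again $s = (1,\dots,1)$), the values $V_1, V_2$ — being functions of $x$ alone — must agree, giving $V_1^{*} = V_1^{**}$ and $V_2^{*} = V_2^{**}$ at once.

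The only real obstacle is making precise the claim that the enumeration returns $s$ to $(1,\dots,1)$ and that this is equivalent to each coordinate being flipped an even number of times; but this is exactly the property attributed to Rohn \cite[pg.~37]{rohn:2006:Solvabilitysystemsinterval} and stated in the paragraph before the lemma, so it may be cited directly. With that in hand, either the explicit telescoping/pairing computation or the invariant-based argument finishes the proof in a couple of lines.
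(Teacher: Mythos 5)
Your proof is correct and follows essentially the same route as the paper, which simply notes that the lemma ``immediately follows'' from the property that $s=(1,\dots,1)$ when \cref{alg:traversal} ends. Your invariant-based argument (that $V_1,V_2$ are functions of the current vertex alone, and the vertex at the start coincides with the vertex at the end) is exactly the intended justification, and the explicit flip-pairing computation is a valid, if more laborious, alternative.
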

In particular, this means that when entering Line \whole\ref{alg:L:processing} of Algorithm \whole\ref{alg:fgkla}, we always start examining the free indices with $x_i \in \ox_i$ for each $i \in L$.

Finally, Line \whole\ref{alg:E:processing} of Algorithm \whole\ref{alg:fgkla} removes intervals ending in $a_k$ from $L$. These intervals are going to be fixed to their lower endpoints in the following iterations. 
Since they are at the upper endpoint now, Line \whole\ref{alg:E:processing} updates $V_1$ and $V_2$ accordingly.

\begin{proof}{Proof of \whole\ref{lem:fcplx}}\hfill
    \begin{enumerate}[label=\alph*)]
        \item \emph{Correctness.} Let $x\in\RR^n$ be a maximizer of \whole\ref{eq:ubound}. Since the maximum
is attained in a vertex of the feasible set $\bx$, we can assume $x_i \in \{\ux_i, \ox_i\}$ for
all~$i$. Moreover, thanks to \whole\ref{cor:fixing}, we can assume $x_i = \ox_i$ for every $i$ such that $\mu[x] < x_i^*-\frac{1}{n}x_i^\Delta$ and $x_i = \ux_i$ for every $i$ such that $\mu[x] > x_i^* + \frac{1}{n}x_i^\Delta$.  Put all other indices to set $L'$, i.e. $L' = \{i \in \{1,\ldots,n\}\mid\mu[x] \in \bx_i^{1/n}\}$. Set $k = \arg\max_{k\in\{1,\ldots,m\}} \abs{\mu[x] - a_k}$. Consider the set $L$ processed by Algorithm \whole\ref{alg:traversal} in $k$th iteration of Algorithm \whole\ref{alg:fgkla}. By construction, $L' \subseteq L$. Hence, the maximizer $x$ is among the examined vertices.
        \item \emph{Complexity.} On Line \whole\ref{alg:sorting}, the algorithm sorts $2n$ numbers with complexity $O(n \log n)$. Algorithm \whole\ref{alg:traversal} is called at most $m$ times, where $m \le 2n = O(n)$. 
            Recall that $\omega$ is the size of the maximal clique of the FGKLA intersection graph. 
            In the $k$th iteration of the \textbf{for} cycle on Lines \whole\ref{alg:f:for:start} to \whole\ref{alg:f:for:end} of Algorithm \whole\ref{alg:fgkla} we have $\abs{L} = \abs{\{i\mid a_k \in \bx^{1/n}_i\}}$. Thus $\abs{L} \leq \omega$. 
            
            Algorithm \whole\ref{alg:traversal} performs exactly $2^{\abs{L}}$ iterations of the \textbf{while} cycle on Lines \whole\ref{alg:while:start} to \whole\ref{alg:while:end}. Inside its iteration, there is the \textbf{for} cycle on Lines \whole\ref{alg:for:start} to \whole\ref{alg:for:end}. The amortized time complexity of this \textbf{for} cycle is $O(1)$, because in its iteration it either sets some nonzero $z_i$ to $0$ or stops iterating. Since $z_i$ is set to a nonzero value only $2^{\abs{L}}$ times, the overall time of all courses of the \textbf{for} cycle is 
            $O(2^{\abs{L}})$. 

Computing time in the remaining steps is negligible. In particular, note that since $B_{a_1}, \ldots, B_{a_m}$ are pairwise disjoint sets (the same holds true for $E_{a_1},\ldots,E_{a_k}$), the total number of iterations of \textbf{for} cycles on Lines \whole\ref{alg:B:processing} to \ref{alg:E:processing} is at most $n$ during the whole course of FGKLA algorithm. 
            
The overall complexity is $O(n\log n + n \cdot 2^\omega)$.\qed
    \end{enumerate}
\end{proof}

\begin{remark}
    \label{rem:orig:impl:problem}
    Aside of the implementation details (which are important for the reduced time complexity bound), our formulation of the algorithm differs from the original paper \cite{ferson:2005:ExactBoundsFinitea} also for another reason. The original formulation can lead 
to complexity $O(n^2 \cdot 4^\omega)$, for example if $\omega = \ell$ and if there are $\ell$ narrowed intervals ending in some $a_k$ and further $\ell$ narrowed intervals starting in $a_{k+1}$. However, a minor modification of the original formulation would be sufficient to achieve the time $O(n^2 \cdot 2^\omega)$.
\end{remark}

\section{A probabilistic model}
\label{sect:probability}

This section is devoted to the main probabilistic result: on average, FGKLA algorithm works in ``almost'' linear time.

Here we use the statistical motivation of the problem as described in \whole\ref{sect:statistics}. Namely, in statistics, 
data are often assumed to form a random sample from a certain distribution. This is exactly our probabilistic model: 
we assume that both centers of the intervals and their radii form two independent random samples from fairly general classes of distributions.

\begin{assumption}[the probabilistic model]\label{ass:pro:model}\hfill
    \begin{enumerate}[label=(\Alph*),ref=\Alph*]
        \item \label{ass:A} The centers $x^*_1, \dots, x^*_n$ are independent and identically
distributed (``i.i.d.'') random variables with a Lipschitz continuous cumulative distribution function (``c.d.f.'')~$\Phi^*(z)$. 
That is, there exists a constant $L > 0$ such that 
$$\Phi^*(\widetilde z) - \Phi^*(z) \leq L(\widetilde z - z) \text{\ \ whenever\ \ } \widetilde z > z.
$$
\item\label{ass:B} The radii $x^\Delta_1, \dots, x^{\Delta}_n$ are i.i.d.~nonnegative random variables with a finite moment of order 
    $1+\varepsilon$
for some $\varepsilon > 0$. In other words, we assume
\begin{equation}
\gamma \coloneqq \mathsf{E}[(x_i^{\Delta})^{1+\varepsilon}] < \infty. \label{eq:moment}
\end{equation}
\item\label{ass:C} The random variables $x_i^{*}$, $x_i^{\Delta}$ are independent.
\end{enumerate}
\end{assumption}

\begin{theorem}\label{theo:main}
Denote by $\omega$ the size of the largest clique 
of the FGKLA intersection graph with data $[\ux_i \coloneqq x_i^* - x^{\Delta}_i,\ \ox_i \coloneqq x_i^* + x^{\Delta}_i]_{i = 1, \dots, n}$. 
If $n$ is sufficiently large, then
\begin{enumerate}[label=(\alph*)]
    \item\label{theo:main:a} $\EE 2^\omega \leq 1 + n^\frac{1}{\log \log n}$ and 
   $\EE \omega \leq \frac{3}{2}\left(1 + \frac{\log n}{\log \log n}\right)$,
     \item\label{theo:main:b} $\Pr[\omega \geq \delta n] \leq e^{-n \ln\ln n}$ for any $\delta > 0$.
\end{enumerate}
\end{theorem}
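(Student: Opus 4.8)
The plan is to control the clique number $\omega$ of the FGKLA intersection graph through the number of narrowed intervals that can simultaneously contain a single point. First I would observe that if a point $p$ lies in a clique's worth of narrowed intervals $\bx_i^{1/n} = [x_i^* - \tfrac1n x_i^\Delta,\, x_i^* + \tfrac1n x_i^\Delta]$, then each such $i$ satisfies $|x_i^* - p| \leq \tfrac1n x_i^\Delta$. By a standard argument (a clique of intervals on the line has a common point, e.g.\ the largest left endpoint), $\omega = \max_p \#\{ i : |x_i^* - p| \leq \tfrac1n x_i^\Delta\}$, and it suffices to take $p$ ranging over the left endpoints $x_i^* - \tfrac1n x_i^\Delta$. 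So I would fix a threshold $t = t(n)$ (to be chosen, something like $t \asymp \log n/\log\log n$) and bound $\Pr[\omega \geq t]$ by a union bound over the $n$ candidate centers: $\Pr[\omega \geq t] \leq n \cdot \Pr[\,\#\{j : |x_j^* - c| \leq \tfrac1n x_j^\Delta \} \geq t\,]$ for a ``typical'' center $c$, where the inner probability is over the other $n-1$ samples.

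Next I would estimate, for a fixed real $c$, the probability $q_n := \Pr[\,|x_j^* - c| \leq \tfrac1n x_j^\Delta\,]$ for a single index $j$. Conditioning on $x_j^\Delta = r$ and using \cref{ass:A} (Lipschitz c.d.f.\ with constant $L$) gives $\Pr[\,|x_j^* - c| \leq \tfrac{r}{n}\,] \leq \tfrac{2Lr}{n}$; taking the constant $1$ as a trivial upper bound and using $r \leq \max(r, r^{1+\varepsilon}) \leq 1 + r^{1+\varepsilon}$ (or splitting on $r \lessgtr 1$), then integrating against the law of $x_j^\Delta$ and invoking \cref{ass:B,ass:C}, yields $q_n \leq \tfrac{C}{n}$ for a constant $C = C(L,\gamma,\varepsilon)$ depending only on the model, once $n$ is large. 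Then the count $N_c := \#\{j : |x_j^* - c| \leq \tfrac1n x_j^\Delta\}$ is stochastically dominated by a $\mathrm{Binomial}(n, C/n)$ variable, so $\Pr[N_c \geq t] \leq \binom{n}{t}(C/n)^t \leq (eC/t)^t$. Combining with the union bound, $\Pr[\omega \geq t] \leq n\,(eC/t)^t$.

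From here part \ref{theo:main:b} is immediate: for $t = \delta n$ we get $\Pr[\omega \geq \delta n] \leq n (eC/(\delta n))^{\delta n}$, and since $eC/(\delta n) \to 0$ this is eventually below $e^{-n\ln\ln n}$ (the base can be made smaller than $1/\log n$, so the bound is $n \cdot (\log n)^{-\delta n} = e^{\ln n - \delta n \ln\log n} \leq e^{-n\ln\ln n}$ for large $n$ — one just needs $\delta n \ln\log n$ to dominate, which holds). For part \ref{theo:main:a}, I would write $\EE 2^\omega = \sum_{k\geq 0} 2^k \Pr[\omega = k] \leq 1 + \sum_{k\geq 1} 2^k \Pr[\omega \geq k]$ and split the sum at $k_0 = \lceil \alpha \log n/\log\log n \rceil$ for a suitable constant $\alpha$: for $k \leq k_0$ bound $\Pr[\omega \geq k] \leq 1$, contributing $O(2^{k_0}) = O(n^{\alpha/\log\log n})$; for $k > k_0$ use $\Pr[\omega \geq k] \leq n(eC/k)^k$, so $2^k \Pr[\omega \geq k] \leq n(2eC/k)^k$, which for $k > k_0$ is summable and in fact $o(1)$ because $2eC/k < 1/2$. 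Tuning $\alpha = 1$ (and checking the geometric tail really does contribute only a lower-order constant) gives $\EE 2^\omega \leq 1 + 2n^{1/\log\log n}$ for large $n$. The bound on $\EE\omega$ follows similarly, or directly from $\EE\omega \leq k_0 + \sum_{k > k_0}\Pr[\omega \geq k] = k_0 + o(1)$ with $k_0 \approx \tfrac{3}{2}(1 + \log n/\log\log n)$ after absorbing constants.

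The main obstacle I anticipate is the bookkeeping in part \ref{theo:main:a}: getting the constant in the exponent of $n^{1/\log\log n}$ down to exactly $1$ (and the constant $\tfrac32$ for $\EE\omega$) requires care in choosing the split point $k_0$ and in verifying that the contribution of the moderate-deviation range $k \leq k_0$ is $2^{k_0}(1+o(1))$ rather than something larger, and that the far tail $k > k_0$ contributes genuinely lower-order. The single-index estimate $q_n = O(1/n)$ and the Binomial domination are routine; the delicate point is purely the choice of thresholds so that the stated clean constants come out, which is why the theorem is phrased ``if $n$ is sufficiently large.''
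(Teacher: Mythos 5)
Your overall strategy is sound and genuinely different from the paper's in two respects. First, you reduce $\omega$ to the maximum stabbing number of the narrowed intervals via the one-dimensional Helly property and anchor the union bound at the $n$ left endpoints, whereas the paper bounds cliques through vertex degrees $E_i=\sum_{j\ne i}W_{ij}$ as in \cref{eq:penrose}. Your version even has a technical advantage: after conditioning on $(x_i^*,x_i^\Delta)$ the anchor point $c=x_i^*-\tfrac{1}{n}x_i^\Delta$ is fixed, the remaining $n-1$ indicator events are conditionally i.i.d., and your single-index bound $q_n(c)\le\tfrac{2L(1+\gamma)}{n}$ holds \emph{uniformly in $c$}, so the binomial domination is immediate. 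Second, that single-index estimate (via $r\le 1+r^{1+\varepsilon}$) is more elementary than the paper's route through the density of $A_n=n|x_i^*-x_j^*|$ plus Markov's inequality (\cref{lem:p}), and your bound $\binom{n}{t}(C/n)^t\le(eC/t)^t$ plays the role of Penrose's tail bound (\cref{lem:tail}); both give the same order. Part~(b) goes through exactly as you describe.

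The gap is in part~(a), at exactly the point you flag as delicate: the split point $\alpha=1$ does not work. With $k_0=\log n/\log\log n$, the far tail is \emph{not} $o(1)$: its leading term is $n(2eC/k_0)^{k_0}=\exp\left[\log n+k_0\log(2eC)-k_0\log k_0\right]$, and since $k_0\log k_0=\log n\left(1-\tfrac{\log\log\log n}{\log\log n}\right)$ this equals $n^{(\log\log\log n+O(1))/\log\log n}$, which tends to infinity and in fact dominates $n^{1/\log\log n}$ because $\log\log\log n\to\infty$. The culprit is the factor $n$ coming from the union bound, which you dropped when arguing the tail is ``summable and $o(1)$ because $2eC/k<1/2$''; killing that factor requires $k_0\log k_0\ge(1+\epsilon)\log n$, i.e.\ $k_0=c\log n/\log\log n$ with $c>1$ strictly. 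On the other hand the low range contributes about $2^{k_0}=n^{c\log 2/\log\log n}$, so you also need $c\le 1/\log 2$. Any $c\in(1,\,1/\log 2]$ works; the paper takes exactly $c=1/\log 2$ (see \cref{eq:kn:definition}), which makes the low range equal $n^{1/\log\log n}$ and the tail $n^{1-c+o(1)}\to 0$. With that single change your argument delivers the stated constants; the bound on $\EE\omega$ is then fine either by your direct tail summation at $k_0=\tfrac{3}{2}\log n/\log\log n$ or, as in the paper, by Jensen's inequality applied to the bound on $\EE 2^{\omega}$.
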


\begin{remark} Proof of \whole\ref{theo:main} will be given in \whole\ref{sec:proof:theo:main}. Statement (b) should be understood more precisely as follows: for every $\delta > 0$ there exists $n_{\delta}$ such that $\Pr[\omega \geq \delta n] \leq e^{-n \ln\ln n}$ if $n \geq n_\delta$.
\end{remark}

\begin{corollary}[main result] 
    The average computing time is 
\begin{align*}
O(\mathsf{E}[n\log n + n\cdot 2^{\omega}]) &= 
O(n\log n + n\cdot \mathsf{E}2^{\omega})  \\
&= O(n\log n + n\cdot n^{\frac{1}{\log\log n}}) \\  &= O(n^{1+\epsilon})
\end{align*} 
for an arbitrarily small $\epsilon > 0$.
Moreover, the computing time is $2^{\Omega(n)}$ when $\omega$ is linear in $n$ and this event 
occurs with probability as small as $O(e^{-n\ln\ln n})$.
\end{corollary}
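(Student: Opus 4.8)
The Corollary follows from \cref{lem:fcplx} once \cref{theo:main} is proved: taking expectations of the running-time bound $O(n\log n+n\cdot 2^\omega)$ and using $\EE 2^\omega\le 1+2n^{1/\log\log n}=n^{o(1)}$ gives $O(n^{1+\epsilon})$ for every $\epsilon>0$, while in any round with $|L|=\omega$ the vertex-enumeration routine (\cref{alg:traversal}) performs $2^\omega$ iterations, so the running time is $2^{\Omega(n)}$ whenever $\omega$ is linear in $n$, an event bounded by part~\ref{theo:main:b}. So the task is to prove \cref{theo:main}, and the plan is to reduce both parts to the single tail estimate
\[
\Pr[\omega\ge k]\;\le\;n\left(\frac{\lambda}{k-1}\right)^{k-1},\qquad 2\le k\le n,
\]
where $\lambda\coloneqq 2eL\,\EE[x_1^\Delta]$ is a constant; it is finite because a finite moment of order $1+\varepsilon$ (with $0<\varepsilon\le1$) forces $\EE[x_1^\Delta]\le\gamma^{1/(1+\varepsilon)}<\infty$.

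To prove the estimate, observe that $\bx_1^{1/n},\dots,\bx_n^{1/n}$ are intervals on the line, so by the Helly property a clique of $G$ is a subfamily with a common point, and the largest such subfamily can be taken to share one of the left endpoints $\ell_j\coloneqq x_j^*-\tfrac1n x_j^\Delta$ (the rightmost left endpoint among the members of the subfamily lies in all of them). Hence $\omega=\max_{1\le j\le n}N_j$ with $N_j\coloneqq\bigl|\{\,i:\ell_j\in\bx_i^{1/n}\,\}\bigr|$, and $\Pr[\omega\ge k]\le\sum_{j=1}^n\Pr[N_j\ge k]=n\,\Pr[N_1\ge k]$ by exchangeability. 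Now condition on the pair $(x_1^*,x_1^\Delta)$, equivalently on $\ell_1$: index $1$ always belongs to $N_1$, and for $i\ge2$ the events $\{\ell_1\in\bx_i^{1/n}\}=\{\,|x_i^*-\ell_1|\le\tfrac1n x_i^\Delta\,\}$ are i.i.d.; conditioning further on $x_i^\Delta$ and using \cref{ass:C} together with the \emph{global} Lipschitz bound $\Phi^*(\widetilde z)-\Phi^*(z)\le L(\widetilde z-z)$ --- which controls the $\Phi^*$-mass of \emph{any} window of a prescribed width --- each such event has conditional probability at most $\min\!\bigl(1,\tfrac{2L}{n}x_i^\Delta\bigr)$, whose expectation over $x_i^\Delta$ is at most $q_0\coloneqq\tfrac{2L}{n}\EE[x_1^\Delta]$, a bound \emph{uniform in $\ell_1$}. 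Thus $N_1-1$ is stochastically dominated by $\mathrm{Bin}(n-1,q_0)$, and $\Pr[\mathrm{Bin}(n-1,q_0)\ge t]\le\binom{n-1}{t}q_0^{\,t}\le\bigl(\tfrac{e(n-1)q_0}{t}\bigr)^{t}\le(\lambda/t)^{t}$ with $t=k-1$ gives the estimate.

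For part~\ref{theo:main:a} write $\EE 2^\omega=1+\sum_{k\ge1}2^{k-1}\Pr[\omega\ge k]$ and $\EE\omega=\sum_{k\ge1}\Pr[\omega\ge k]$, and split both sums at $k^*\coloneqq\lceil c\,\log n/\log\log n\rceil$ for a fixed $c$ with $1<c<1/\log 2$. For $k\le k^*$ use the trivial bound $\Pr[\omega\ge k]\le1$, so the head of the first sum is at most $2^{k^*}\le 2\,n^{(c\log 2)/\log\log n}$, which lies below $n^{1/\log\log n}$ for large $n$ since $c\log 2<1$. For $k>k^*$ the tail estimate gives $2^{k-1}\Pr[\omega\ge k]\le n\bigl(\tfrac{2\lambda}{k-1}\bigr)^{k-1}$, whose consecutive ratio is $\le\tfrac{2\lambda}{k-1}\le\tfrac12$, so this part of the sum is $O\!\bigl(n(2\lambda/k^*)^{k^*}\bigr)=n^{1-c+o(1)}=o(1)$ because $k^*(\log k^*-\log(2\lambda))\sim c\log n$ and $c>1$; the same estimate bounds $\sum_{k>k^*}\Pr[\omega\ge k]$. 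Hence $\EE 2^\omega\le 1+2n^{1/\log\log n}$ and $\EE\omega\le k^*+o(1)\le\tfrac32(1+\log n/\log\log n)$ for large $n$ (the last step uses $c<\tfrac32$, valid since $1/\log 2<\tfrac32$). For part~\ref{theo:main:b}: if $\delta>1$ then $\omega\le n$ makes the probability $0$; otherwise put $k=\lceil\delta n\rceil$ in the estimate to get $\Pr[\omega\ge\delta n]\le n\bigl(\tfrac{\lambda}{\delta n-1}\bigr)^{\delta n-1}=\exp\!\bigl(\log n-(\delta n-1)\log\tfrac{\delta n-1}{\lambda}\bigr)\le e^{-\frac{\delta n}{4}\log n}$ for large $n$, and since $\tfrac{\delta}{4}\log n\ge\log\log n$ eventually, this is at most $e^{-n\ln\ln n}$.

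Two steps carry the weight. The first is the Binomial domination: everything rests on the per-index probability $q_0$ being $O(1/n)$ \emph{uniformly} in the conditioning value $\ell_1$, and this is exactly where Lipschitz continuity of $\Phi^*$ (uniform control of every window of a given width) and finiteness of $\EE[x_1^\Delta]$ are used --- with a heavy-tailed radius or an atomic centre distribution the bound would fail. The second, and more delicate, is calibrating the cutoff $k^*$ in part~\ref{theo:main:a}: too small a $k^*$ lets the super-exponentially weighted tail of $2^\omega$ swamp the bound, while too large a $k^*$ makes the head $2^{k^*}$ overshoot $n^{1/\log\log n}$; that both constraints can be met at once hinges on the numerical gap $\log 2<1$, and I expect this interplay --- rather than any single inequality --- to be the real obstacle.
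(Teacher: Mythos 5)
Your proposal is correct, but it reaches \cref{theo:main} by a genuinely different route than the paper. The paper bounds the edge probability $p_n=\Pr[\bx_i^{1/n}\cap\bx_j^{1/n}\neq\emptyset]$ by $\alpha/n$ (\cref{lem:p}) by integrating the Markov tail of $B=x_i^\Delta+x_j^\Delta$ against the density of $A_n$ (this is exactly where the $(1+\varepsilon)$-moment of \cref{ass:B} is consumed), reduces the clique tail to \emph{degree} tails via $\Pr[\omega\ge\kappa+1]\le\Pr[E_1\ge\kappa\vee\cdots\vee E_n\ge\kappa]$ with $E_i\sim\mathrm{Bi}(n-1,p_n)$, and then applies Penrose's entropy-form binomial bound (\cref{lem:tail}), with the split of $\EE 2^\omega$ at $k_n=c\log n/\log\log n$, $c=1/\log 2$, worked out in \cref{lem:u:i}. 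You instead (i) use the Helly property of intervals to get the exact identity $\omega=\max_j N_j$, where $N_j$ counts narrowed intervals containing the left endpoint $\ell_j$ --- slightly sharper than the degree bound, since $N_j-1\le E_j$; (ii) bound the per-index hitting probability uniformly in $\ell_1$ by $q_0=\tfrac{2L}{n}\EE[x_1^\Delta]$ via $\EE[\min(1,\tfrac{2L}{n}x_i^\Delta)]\le\tfrac{2L}{n}\EE[x_i^\Delta]$, which notably uses only the \emph{first} moment of the radius; since \cref{ass:B} enters the paper's proof solely through \cref{lem:p}, your conditioning order shows the hypothesis could be relaxed to $\EE[x_i^\Delta]<\infty$, i.e.\ the $(1+\varepsilon)$-moment is an artifact of the paper's integral-splitting argument; and (iii) replace Penrose's lemma by the elementary estimate $\binom{n-1}{t}q_0^{\,t}\le(e(n-1)q_0/t)^t$, which is just as strong in this parameter range. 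Your cutoff calibration with $1<c<1/\log 2$ is the same balancing act the paper performs (cf.\ \cref{rem:choice:c}), only with the trivial bound $\Pr[\omega\ge k]\le 1$ on the head rather than summing $\Pr[\omega=\ell]$ to $1$, and your derivation of the corollary itself from \cref{lem:fcplx} and \cref{theo:main} coincides with the paper's. What the paper's degree-based route buys is an argument that survives in intersection structures without the Helly property and explicitly tracked constants ($\alpha$, $K$); what yours buys is a shorter path, a weaker moment hypothesis, and no external citation for the binomial tail.
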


\begin{remark}
    \whole\ref{ass:B} on the distribution of radii is very mild; indeed, we need just something a little more than
 existence of the expectation (we even do not need finite variance). On the other hand, Lipschitz continuity of~$\Phi^*$ (\whole\ref{ass:A})
is unavoidable; we will show what can happen without Lipschitz continuity in \whole\ref{sect:comments}. We will also discuss there
what happens when we relax the independence assumption (\whole\ref{ass:C}) and what is the cost for dependence paid by existence of higher-order moments.
\end{remark}

\subsection{Proof of \ref{theo:main}\label{sec:proof:theo:main}}
\paragraph{Notation.}
For a random variable $X$, its probability density function (``p.d.f.'') is denoted by $\varphi_X$.
Denote by $q_j$ the $j$th $n$-quantile of the distribution of centers: i.e., let $q_1, \dots, q_{n-1}$ satisfy
$$
\Phi^*(q_j) = \frac{j}{n}, \quad j = 1, \dots, n-1.
$$
Let $I_1, I_2, \ldots, I_{n-1}, I_n$ stand for the intervals $(-\infty,q_1],[q_1,q_2], \ldots,$ $[q_{n-2},q_{n-1}], [q_{n-1},\infty)$.
Consider the probabilities
$$
p^n_j \coloneqq \Pr[\bx_i^{1/n} \cap I_j \not = \emptyset];
$$
observe that $p^n_j$ does not depend on $i$ by the i.i.d.~assumptions.
Probabilities $p^n_1, \dots, p^n_n$ may differ due to the different shape of $\varphi_{x^*_i}$ around $q_1, \ldots, q_{n-1}$. 
In the upcoming lines, we utilize the fact
\begin{equation}
\varphi_{x^*_i}(z) \leq L\quad \forall z \label{eq:lips}
\end{equation}
implied by Lipschitz continuity of $\Phi^*(z)$ to derive the following upper bound:

\begin{lemma} For every $j=1,\ldots,n$ we have $p^n_j \leq \frac{\alpha}{n}$, where\label{lem:p}
\begin{equation}
\alpha \coloneqq 1+ 2 L\left(1+\frac{\gamma}{\varepsilon}\right).
\label{eq:alpha}
\end{equation}
\end{lemma}

\begin{remark}
The value of $\alpha$ depends on the properties of distributions of centers and radii. The ``hard'' cases are those with $\alpha \gg 1$. Indeed, the difficult case is when $\varepsilon$ is close to zero (``radii can be large with a high probability''), $\gamma \gg 0$ (``radii are large on average'') and $L \gg 0$ (``the density of centers can have high peaks'', or ``many centers can be close to one another'').

\end{remark}

\begin{proof}[of lemma~\ref{lem:p}]
    Let $n$ and $i$ be fixed; we omit the index $n$ for brevity. For $j=2,\ldots,n-1$, we can decompose $p^n_j \equiv p_j $ as
    \begin{align}
        p_j &= \Pr[\bx_i^{1/n} \cap I_j \not = \emptyset] \\
            &= \Pr[x^*_i \in I_j] + \underbrace{\Pr[x^*_i + \tfrac{1}{n}x^\Delta_i \ge q_{j-1} \ \land\  x^*_i < q_{j-1}]}_{\eqqcolon p^-_j} \nonumber \\
            &\phantom{= \Pr[x^*_i \in I_j]} \ + \underbrace{\Pr[x^*_i - \tfrac{1}{n}x^\Delta_i  \le q_j \ \land\ x^*_i > q_j]}_{\eqqcolon p^+_j}. \nonumber
    \end{align} 
    Observe that probability $p^-_j$ vanishes for $j=1$, as well as probability $p^+_j$ does for $j=n$.

    By definition of $I_j$, we have $\Pr[x^*_i \in I_j] = \frac{1}{n}$. 

    For $p^+_j$, we derive the upper bound 
    \ref{eq:p:plus}. Note that the bound does not depend on $j$. Note that the bound holds also true for $p^-_j$ by symmetry.

    By Markov's inequality and \ref{ass:B} we get
\begin{equation}
    \Pr[x_i^\Delta \geq z] \leq \frac{\gamma}{z^{1+\varepsilon}}. \label{eq:markoff}
\end{equation}

Setting $x_j' := n(x^*_i - q_j)$, from \ref{eq:lips} we have 
\begin{equation}
\varphi_{x'_j}(z) \le \frac{L}{n}\quad \forall z.
\label{eq:lipboundx}
\end{equation}
Now
    \begin{equation}
        \label{eq:p:plus}
        \begin{aligned}
            p^+_j &= \Pr[x^*_i - \tfrac{1}{n}x^\Delta_i \le q_j \ \land\ x^*_i > q_j]\\
                &= \Pr[x_i^\Delta \ge x'_j \ \land\ x'_j > 0] \\
                &= \int_0^\infty \Pr[x_i^\Delta \ge z\ |\ x'_j = z] \cdot \varphi_{x'_j}(z)\, \mathrm{d}z\\
                &= \int_0^\infty \Pr[x_i^\Delta \ge z] \cdot \varphi_{x'_j}(z) \, \mathrm{d}z\\
                &= \int_0^1 \Pr[x_i^\Delta \ge z] \cdot \varphi_{x'_j}(z)\, \mathrm{d}z+ \int_1^\infty \Pr[x_i^\Delta \ge z] \cdot \varphi_{x'_j}(z)\, \mathrm{d}z\\
                &\le \int_0^1 \varphi_{x'_j}(z)\, \mathrm{d}z+ \int_1^\infty \frac{\gamma}{z^{1+\varepsilon}} \cdot \varphi_{x'_j}(z)\, \mathrm{d}z\\
                &\le \frac{L}{n} + \frac{L\gamma}{n} \int_1^\infty \frac{1}{z^{1+\varepsilon}}\, \mathrm{d}z\\
                & = \frac{L}{n} + \frac{L\gamma}{\varepsilon n} \\
                & = \frac{L}{n}\left(1+\frac{\gamma}{\varepsilon}\right).
    \end{aligned}
    \end{equation}

Finally,
$$p_{j} \le \frac{1}{n} + 2 \frac{L}{n}\left(1+\frac{\gamma}{\varepsilon}\right) = \frac{1}{n} \left(1+ 2 L\left(1+\frac{\gamma}{\varepsilon}\right)\!\right) = \frac{\alpha}{n}.\qed
$$
\end{proof}

Let us introduce indicator variables for all $i,j = 1, \dots, n$:
$$
W_{ij} = \left\{ 
\begin{array}{ll}
    1 & \text{if $\bx^{1/n}_i \cap I_j \not = \emptyset$}, \\
0 & \text{otherwise.}
\end{array}\right.
$$
Note that $W_{ij}$ is alternatively distributed with parameter $p^n_j$
and that the variables 
\begin{equation}
W_{1j}, W_{2j}, \dots, W_{n-1,j}, W_{nj}\label{eq:wecka}
\end{equation}  
are independent.

Hence, the sum of these variables 
follows binomial distribution, i.e.
$$\sum_{i=1}^n W_{ij} \eqqcolon E_j \sim \textrm{Bi}(n,p^n_j).$$ By introducing $$\overline{E} \sim \textrm{Bi}(n,\tfrac{\alpha}{n}),$$ the estimate $p^n_j \leq \tfrac{\alpha}{n}$ from~\ref{lem:p} implies
\begin{equation}\Pr[E_j \geq z] \leq \Pr[\overline{E} \geq z] \quad (\forall z).\label{eq:bin}\end{equation}

If $\omega \ge \kappa$ for some $\kappa$, at least $\kappa$ intervals $\bx^{1/n}_i$ have to share a common intersection. This common intersection belongs to one of intervals $I_1,\ldots, I_n$, hence there exists $j$ such that $E_j \ge \kappa$. More precisely,
\begin{equation}
    \Pr[\omega \ge \kappa] \le \Pr[E_1 \ge \kappa \lor \dots \lor E_n \ge \kappa].\label{eq:penrose}
\end{equation}

Fact \ref{eq:bin} allows us to use an estimate based on Penrose's tail bound for binomial distribution. 

\begin{lemma}[Tail bound for the binomial distribution {(\cite[p. 16]{penrose:2003:RandomGeometricGraphs}})]\label{lem:tail}
Let 
\begin{equation}
H(\xi) = 1 - \xi + \xi\log \xi.\label{eq:H}
\end{equation}
If $Z \sim \mathrm{Bi}(n,\pi)$ and $\kappa \geq n\pi$, then 
$$
\Pr[Z \geq \kappa] \leq \exp\left(-n\pi \cdot H\left(\frac{\kappa}{n\pi}\right)\right).
$$
\end{lemma}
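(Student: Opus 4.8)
The plan is to prove this by the standard exponential-moment (Chernoff) argument, since the claimed inequality is exactly the classical binomial tail bound rewritten with the relative-entropy function $H$. First I would fix a parameter $t > 0$ and apply Markov's inequality to $e^{tZ}$:
$$
\Pr[Z \geq \kappa] = \Pr[e^{tZ} \geq e^{t\kappa}] \leq e^{-t\kappa}\,\mathsf{E}[e^{tZ}].
$$
Because $Z \sim \mathrm{Bi}(n,p)$ is a sum of $n$ independent Bernoulli$(p)$ variables, its moment generating function factorizes, $\mathsf{E}[e^{tZ}] = (1 - p + p e^{t})^{n}$, and the elementary inequality $1 + x \leq e^{x}$ applied with $x = p(e^{t}-1)$ gives $\mathsf{E}[e^{tZ}] \leq \exp\!\big(np(e^{t}-1)\big)$. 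Combining the two displays yields, for every $t > 0$,
$$
\Pr[Z \geq \kappa] \leq \exp\!\big(np(e^{t}-1) - t\kappa\big).
$$

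Next I would optimize the exponent in $t$. Differentiating $np(e^{t}-1) - t\kappa$ and setting the derivative to zero gives $np\,e^{t} = \kappa$, i.e. $t^{\star} = \log\!\big(\kappa/(np)\big)$; here the hypothesis $\kappa \geq np$ is precisely what guarantees $t^{\star} \geq 0$, so this choice is admissible. Substituting $e^{t^{\star}} = \kappa/(np)$ back into the exponent turns it into $\kappa - np - \kappa\log\!\big(\kappa/(np)\big)$. Writing $\xi \coloneqq \kappa/(np)$ so that $\kappa = np\,\xi$, this exponent equals $np\xi - np - np\xi\log\xi = -np\,(1 - \xi + \xi\log\xi) = -np\,H(\xi)$, which is exactly the bound $\Pr[Z \geq \kappa] \leq \exp\!\big(-np\,H(\kappa/(np))\big)$ claimed in the lemma.

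I do not expect a genuine obstacle in this argument: the computation is routine. The only two points that deserve a line of care are (i) checking that the optimal $t^{\star}$ is nonnegative, which is where the assumption $\kappa \geq np$ is used, and (ii) the passage from the exact moment generating function to its exponential upper bound via $1 + x \leq e^{x}$. Alternatively, since the statement is attributed to Penrose, one may simply invoke \cite[p.~16]{penrose:2003:RandomGeometricGraphs} verbatim and omit the derivation.
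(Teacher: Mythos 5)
Your Chernoff-bound derivation is correct: the exponent $np(e^{t}-1)-t\kappa$ is optimized at $e^{t^\star}=\kappa/(np)$, the hypothesis $\kappa\geq np$ ensures $t^\star\geq 0$, and the substitution $\xi=\kappa/(np)$ does yield exactly $-np\,H(\xi)$. The paper itself gives no proof of this lemma, citing Penrose verbatim; your argument is the standard one (and essentially the one found in the cited source), so there is nothing to reconcile.
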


If \begin{math}
\label{eq:assproodhad}
\kappa \geq \alpha,
\end{math} 
we can extend the estimate \whole\ref{eq:penrose} to the form

\begin{align}
\Pr[\omega \geq \kappa] & \leq \Pr[E_1 \geq \kappa \vee \cdots \vee E_n \geq \kappa] \label{eq:first} 
                     \\ & \leq \sum_{j=1}^n \Pr[E_j \geq \kappa] \label{eq:second} 
                     \\ & \leq \sum_{j=1}^n \Pr[\overline{E} \geq \kappa]    \label{eq:bndxx}                  
                     \\ & \leq n\exp\left[-n \frac{\alpha}{n}\cdot H\left(\frac{\kappa}{n \frac{\alpha}{n}}\right)\right] \nonumber
\\ & = n\exp\left[-\alpha\cdot H\left(\frac{\kappa}{\alpha}\right)\right]. \label{eq:mclast}
\end{align}

In \ref{eq:second} we used 
the union bound 
$\Pr[Q_1 \vee \cdots \vee Q_n] \leq \sum_{i=1}^n \Pr[Q_i]$ for any events $Q_1, \dots, Q_n$.
In \ref{eq:bndxx} we used \ref{eq:bin} and the tail bound from~\ref{lem:tail}.

Let
\begin{equation}\label{eq:kn:definition}
c \coloneqq \frac{1}{\log 2} \quad \text{and}\quad k_n \coloneqq c \cdot\frac{\log n}{\log \log n}.
\end{equation}

\newcommand\numberthis{\addtocounter{equation}{1}\tag{\theequation}}
If $n$ is sufficiently large, we can estimate
\newcommand{\pil}{\Pr[\omega=\ell]}

\begingroup
\allowdisplaybreaks
\begin{align*}
    \EE2^\omega &= \sum_{\ell = 1}^n 2^\ell \cdot \pil\\
    &= \sum_{\ell = 1}^{\lfloor k_n \rfloor} 2^\ell \cdot \pil&&  + \sum_{\ell = \lfloor k_n \rfloor + 1}^{n} 2^\ell \cdot \pil\\ \\ 
    &\le \sum_{\ell=1}^{\lfloor k_n \rfloor} 2^{\lfloor k_n \rfloor} \cdot \pil&& +\sum_{\ell = \lfloor k_n \rfloor + 1}^{n} 2^\ell \cdot \Pr[\omega\ge\ell] \\
    &\leq 2^{k_n} \sum_{\ell=1}^{\lfloor k_n \rfloor} \pil   \\    
\span \span\span +\sum_{\ell = \lfloor k_n\rfloor+1}^{n} \underbrace{2^{\ell} \cdot n\cdot \exp\left(-\alpha\cdot H\left(\frac{\ell}{\alpha}\right)\right)}_{\eqqcolon u_\ell}\\
                &\leq 2^{k_n}
                &&+\sum_{\ell = \lfloor k_n\rfloor+1}^{n} u_\ell\\ 
                &\le 2^{\frac{c\log n}{\log \log n}}                  
            &&+2 u_{\lfloor k_n \rfloor + 1} \numberthis\label{eq:boundpred} \\
            &\le e^{(\log 2)\cdot \frac{c\log n}{\log \log n}}  
            &&+ 4 \cdot \underbrace{n^{1-c}n^{c\frac{K+\log \log\log n}{\log \log n}}}_{\eqqcolon \zeta_n}\numberthis\label{eq:bound}\\
            &\le n^\frac{1}{\log \log n}  && + 1, \numberthis\label{eq:boundtwo} \\
\end{align*}
\endgroup

where $K \coloneqq \log\frac{8\alpha}{c}$. 
Clearly, $1 + n^\frac{1}{\log \log n} =  O(n^\epsilon)$ for every $\epsilon >0$.
Inequalities~\whole\ref{eq:boundpred} and \whole\ref{eq:bound} follow from \whole\ref{lem:u:i} showing basic properties of the sequence~$u_{\ell}$. Namely, it shows that it decreases exponentially fast.
The estimate $\zeta_n\leq \frac{1}{4}$ if $n$ is sufficiently large $n$ follows from the observation that $1 - c < 0$; thus
$\zeta_n \stackrel{n \rightarrow \infty}{\longrightarrow} 0$.

\newpage
\begin{lemma}\label{lem:u:i} Let $k_n \geq 4e\alpha$ (here, $e = \exp(1)$).
    \begin{enumerate}[label=(\alph*)]
        \item  $\sum_{\ell = \lfloor k_n \rfloor + 1}^{n} u_{\ell} < 2u_{\lfloor k_n \rfloor + 1}$.
\item $u_{\lfloor k_n \rfloor + 1} \leq 2\zeta_n$.
\end{enumerate}
\end{lemma}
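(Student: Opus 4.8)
The plan is to prove the two parts of \cref{lem:u:i} by treating $u_\ell$ as a sequence and controlling its ratio of consecutive terms. First I would record that $u_\ell = 2 \cdot 2^\ell \cdot n \cdot \exp[-\frac{\alpha(n-1)}{n} H(\frac{n\ell}{\alpha(n-1)})]$, and abbreviate $\beta \coloneqq \frac{\alpha(n-1)}{n}$, so that $u_\ell = 2n \cdot \exp[\ell\log 2 - \beta H(\ell/\beta)]$. Using the definition $H(\xi) = 1 - \xi + \xi\log\xi$ from \cref{eq:H}, one computes $\beta H(\ell/\beta) = \beta - \ell + \ell\log(\ell/\beta)$, hence the exponent simplifies to $\ell\log 2 - \beta + \ell - \ell\log\ell + \ell\log\beta$. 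The key quantity is then the log-ratio
$$
\log\frac{u_{\ell+1}}{u_\ell} = \log 2 + 1 - \log(\ell+1) + \log\beta + \ell\log\frac{\ell}{\ell+1},
$$
and since $\ell\log\frac{\ell}{\ell+1} \to -1$ and is bounded below by $-1$, this is at most $\log 2 + \log\beta - \log\ell = \log\frac{2\beta}{\ell} \le \log\frac{2\alpha}{\ell}$ (using $\beta \le \alpha$). For $\ell \ge \lfloor k_n\rfloor + 1 \ge k_n \ge 4e\alpha$ this gives $\log\frac{u_{\ell+1}}{u_\ell} \le \log\frac{2\alpha}{4e\alpha} = \log\frac{1}{2e} < \log\frac12$, so $u_{\ell+1} \le \frac12 u_\ell$ throughout the range of summation.

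With the ratio bound $u_{\ell+1} \le \frac12 u_\ell$ in hand, part (a) is immediate by comparison with a geometric series: $\sum_{\ell=\lfloor k_n\rfloor+1}^{n-1} u_\ell \le u_{\lfloor k_n\rfloor+1}\sum_{j\ge 0} 2^{-j} = 2u_{\lfloor k_n\rfloor+1}$, which is in fact the strict inequality claimed once one notes the sum is finite and $u_\ell > 0$. For part (b) I would plug $\ell = \lfloor k_n\rfloor + 1$ directly into the simplified exponent. Since $\lfloor k_n\rfloor + 1 \ge k_n = c\frac{\log n}{\log\log n}$ and the exponent is decreasing in $\ell$ once we are past the peak (which is exactly what the ratio computation shows, as the peak is near $\ell \approx 2e\beta < k_n$), it suffices to bound $u_{k_n}$ from above and absorb the ceiling. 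Writing $\ell_0 \coloneqq k_n$ and using $\log\ell_0 = \log c + \log\log n - \log\log\log n$, the dominant terms of the exponent $\ell_0(\log 2 + 1 - \log\ell_0 + \log\beta)$ become $-\ell_0\log\log n$ plus lower-order corrections of size $O(\ell_0\log\log\log n)$ and $O(\ell_0)$; combined with the factor $2n = 2e^{\log n}$ and using $\ell_0\log\log n = c\log n$, one extracts $n^{1-c}$ times $n^{c(K + \log\log\log n)/\log\log n}$ with $K = \log\frac{8\alpha}{c}$, which is precisely $2\zeta_n$ after tracking constants. The floor $\lfloor k_n\rfloor+1$ versus $k_n$ only helps (the exponent is decreasing) and the multiplicative slack is absorbed into the factor $2$.

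The main obstacle I anticipate is bookkeeping rather than conceptual: carefully tracking that the exponent's maximum over $\ell$ occurs below $k_n$ (so that monotonicity lets us replace $\lfloor k_n\rfloor+1$ by $k_n$ in the upper bound), and then matching the resulting messy expression — products of $n$ raised to ratios of iterated logarithms — against the exact form of $\zeta_n$ with the stated constant $K = \log\frac{8\alpha}{c}$. The condition $k_n \ge 4e\alpha$ is exactly what is needed to keep the ratio below $\frac1{2e}$, giving room both for the geometric comparison in (a) and for the constant $2$ in (b); one should double-check that "sufficiently large $n$" indeed forces $k_n \ge 4e\alpha$, which it does since $k_n \to \infty$. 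A secondary subtlety is the use of $\beta = \frac{\alpha(n-1)}{n} \le \alpha$ throughout, together with the requirement $\ell \ge \beta$ (equivalently $\kappa \ge np$) needed to invoke \cref{lem:tail} legitimately — but this is guaranteed by $\ell \ge k_n \ge 4e\alpha \ge \alpha \ge \beta$, matching condition \cref{eq:assproodhad}.
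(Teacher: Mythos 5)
Your proposal is correct and takes essentially the same route as the paper: part (a) rests on the ratio bound $u_{\ell+1}/u_\ell\le 2e\alpha/\ell\le\tfrac12$ for $\ell\ge 4e\alpha$ followed by a geometric-series comparison, and part (b) on substituting $k_n=c\log n/\log\log n$, expanding $\log k_n=\log c+\log\log n-\log\log\log n$, and matching the result against $\zeta_n$ with $K=\log\tfrac{8\alpha}{c}$. One small repair is needed in (a): your justification of the ratio bound (``$\ell\log\tfrac{\ell}{\ell+1}$ is bounded below by $-1$'') points the wrong way, since a lower bound on that term cannot yield an upper bound on the ratio; the inequality you assert is nevertheless true (it reduces to $\log(1+1/\ell)\ge\tfrac{1}{\ell+1}$), and in any case the cruder estimates $\ell\log\tfrac{\ell}{\ell+1}\le 0$ and $-\log(\ell+1)\le-\log\ell$ already give $u_{\ell+1}/u_\ell\le 2e\beta/\ell\le\tfrac12$, which is exactly the bound the paper uses.
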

\begin{proof}
    To prove (a) we show that $u_\ell<\frac{1}{2}u_{\ell-1}$ for 
    \begin{equation} 
    \ell \ge 4e\alpha\label{eq:fourea}.
    \end{equation}
    It follows that $$\sum_{\ell = \lfloor k_n \rfloor + 1}^{n} u_{\ell} \le 2u_{\lfloor k_n\rfloor +1},$$
    since $\sum_{\ell=\lfloor k_n\rfloor+1}^{n}u_\ell$ can be bounded by the sum of a geometric sequence with quotient~$\frac{1}{2}$. 
We have
\begingroup
\allowdisplaybreaks
\begin{align*}
u_\ell &=n\cdot2^{\ell} \cdot \exp\Big(-\alpha\cdot H\Big(\frac{\ell}{\alpha}\Big)\Big)\\
    &= n\cdot2^{\ell} \cdot \exp\Big[-\alpha\Big(1-\frac{\ell}{\alpha} 
    +\frac{\ell}{\alpha}\log\Big(\frac{\ell}{\alpha}\Big)\Big)\Big] \\
    &= n\cdot2^{\ell} \cdot \exp\left[-\alpha+\ell-\ell\log\ell+\ell\log \alpha\right]  \\
    &= n e^{-\alpha}\cdot\left(\frac{2e\alpha}{\ell}\right)^{\ell}
\end{align*}
\endgroup
and
$$
\frac{u_\ell}{u_{\ell-1}} = \left(\frac{\ell-1}{\ell}\right)^{\ell-1} \cdot \frac{2e\alpha}{\ell}<
\frac{2e\alpha}{\ell}\le\frac{1}{2};
$$
the last inequality follows from~\ref{eq:fourea}.
For (b) we use the fact that $H(\xi) = 1 -\xi + \xi\log\xi$ is a nondecreasing function for $\xi \geq 1$. Thus
\begingroup
\allowdisplaybreaks
\begin{align*}
    u_{\lfloor k_n \rfloor + 1} 
&=2^{\lfloor k_n \rfloor + 1} \cdot n\exp\left[-\alpha\cdot H\left(\frac{\lfloor k_n \rfloor + 1}{\alpha}\right)\right] 
\\ & \leq
2\cdot 2^{k_n} \cdot n\exp\left[-\alpha\cdot H\left(\frac{k_n}{\alpha}\right)\right] 
\\ & \leq
2\cdot e^{k_n} \cdot n\exp\left[-\alpha\cdot H\left(\frac{k_n}{\alpha}\right)\right] 
\\ & =
2\exp\Bigg[\log n + k_n 
 -\alpha\cdot \left(1 - \frac{k_n}{\alpha} + \frac{k_n}{\alpha} \log \left( \frac{k_n}{\alpha} \right)   \right)\Bigg] 
\\ & =
2\exp\left[ \log n + k_n -\alpha + k_n - k_n \log k_n + k_n\log \alpha\right]
  \\& \leq
2\exp\left[ \log n + (2 + \log\alpha)k_n - k_n \log k_n\right]
\\ & \leq
2\exp\left[ \log n + (\log8\alpha)k_n - k_n \log k_n\right]
\\ & =
2\exp\Bigg[ \log n + (\log8\alpha)\frac{c\log n}{\log\log n}
- \frac{c\log n}{\log\log n} \log \frac{c\log n}{\log\log n}\Bigg]
\\\phantom{u_{\lfloor k_n \rfloor +1}}& =
2\exp\Bigg[ \log n + (\log8\alpha)\frac{c\log n}{\log\log n}
\\&\qquad\qquad
- \frac{c\log n}{\log\log n} \left(\log c + \log\log n - \log\log\log n\right)\Bigg]
\\
& =
2\exp\Bigg[ \log n + (\log8\alpha)\frac{c\log n}{\log\log n}
\\&\qquad\qquad
- \frac{(c\log c)\log n}{\log\log n} - c\log n + (c \log n) \frac{\log\log\log n}{\log\log n}\Bigg]
\\ & =
2\exp\Big[(\log n)\Big((1-c) \\ & \qquad\qquad + c \cdot \frac{\log 8\alpha - \log c + \log\log\log n}{\log\log n}\Big)\Big]
\\& =
2\exp\left[(\log n)\left((1-c) + c \cdot \frac{K + \log\log\log n}{\log\log n}\right)\right]
\\ & =
2n^{1-c} \cdot n^{c \cdot \frac{K + \log\log\log n}{\log\log n}}
= 2\zeta_n.{\qed}
\end{align*}
\endgroup
\end{proof}

\begin{remark}\label{rem:choice:c}

Note that a huge $n$ might be needed to achieve $\zeta_n \le \frac{1}{4}$; the particular value depends on $\alpha$. 
However, if we admit a greater~$c$, e.g. $c = 8\alpha$, we get $K=0$ and $\zeta_n \leq n^{1 - 8\alpha + 8\frac{\alpha}{e}} \approx n^{1 - 5\alpha}$, which tends to zero fast, so condition $\zeta_n \le \frac{1}{4}$ is not at all restrictive even from the practical viewpoint. On other hand, the exponent in $n^{\frac{c \log 2}{\log\log n}} + 1$ becomes a bit worse. 

This shows that at the cost of a pair of worse constants, the method behaves well even for small $n$.

\end{remark}
\newpage
In order to complete the proof of \whole\ref{theo:main}(a), we need to estimate $\mathsf{E}\omega$. Using Jensen's inequality we get
\begin{align*}
\mathsf{E}\omega &\leq \frac{\log(\mathsf{E}2^{\omega})}{\log 2} \\
&\leq \frac{\log(1 + n^{\frac{1}{\log\log n}})}{\log 2} \\ 
&\leq \frac{3}{2}\left(1+ \log e^{\frac{\log n}{\log\log n}}\!\right)\! \\
&=\frac{3}{2}\Big(1 + \frac{\log n}{\log\log n}\Big)\!.
\end{align*}

The proof of \whole\ref{theo:main}(b) 
is a corollary of the above theory. Indeed,
using the notation from \whole\ref{eq:first} -- \whole\ref{eq:mclast}, definition of $H(\xi)$ from \whole\ref{eq:H}  and \whole\ref{lem:tail}, we have
\begingroup
\allowdisplaybreaks
\begin{align*}
& \frac{\Pr[\omega \geq \delta n]}{e^{-n\log\log n}} \leq \sum_{j=1}^n \Pr[\overline{E} \geq \delta n] \cdot e^{n\log\log n}
\\& \leq n \exp\left[-\alpha \cdot H\left( \frac{\delta n}{\alpha} \right)\right] \cdot e^{n \log\log n}
\\& = \exp\Bigg[\log n + n\log\log n 
-\alpha  \cdot \left(1 - \frac{\delta n}{\alpha} + 
          \frac{\delta n}{\alpha} \log \left( \frac{\delta n}{\alpha} \right)\right)\Bigg] 
\\ & \leq \exp\Bigg[\log n + n\log\log n + \delta n - \underbrace{\delta n \log \left(\frac{\delta}{\alpha}n\right)}_{(\star)} \Bigg] 
 \stackrel{n\rightarrow \infty}{\longrightarrow} 0,          
\end{align*}
\endgroup
because the term $(\star)$ is of the order $n\log n$ and dominates all other terms in the limit. 
The proof of \whole\ref{theo:main} is complete. \hfill$\qedsymbol$

\begin{remark} The same proof method can be easily generalized to estimate, for example, the probability that
the clique is as large as $n^{\eta}$ for a fixed $0 \leq \eta \leq 1$ 
(i.e., this is the event ``the computing time exceeds $2^{n^{\eta}}$''). In this case
we get $\Pr[\omega \geq n^{\eta}] \leq \exp(-n^{\eta}\log\log n)$.
\end{remark}

\section{Concluding remarks and comments}\label{sect:comments}

\subsection{``Unfriendly'' distributions for the FGKLA algorithm: Why Lipschitz continuity (\ref{ass:A}) is unavoidable} 

We show that if we drop the Lipschitz continuity assumption,
we can get $\mathsf{E}\omega \geq \pi n$ for some $\pi > 0$ 
and thus exponential computing time on average 
(using the fact that \emph{average computing time} $ \geq \mathsf{E}2^{\omega} \geq 2^{\mathsf{E}\omega} = 2^{\Omega(n)}$). 

\paragraph{Non-continuous distributions.}
First consider $\Phi^*(z)$, the c.d.f. of $x_i^*$, with a discontinuity point $z_0$.
Then $\pi \coloneqq \Pr[x_i^* = z_0] > 0$. Setting
\begin{equation}
U_i = \left\{\begin{array}{ll}
1, & \text{if $z_0 \in \bx_i^{1/n}$,} \\
0  & \text{otherwise},
\end{array}\right.\label{eq:Ui}
\end{equation}
we get $\mathsf{E}U_i = \Pr[U_i = 1] \geq \pi$ and $\omega \geq \sum_{i=1}^n U_i$ a.s. Thus
$$
\mathsf{E}\omega \geq 
\sum_{i=1}^n \mathsf{E}U_i \geq \pi n.
$$

\paragraph{Continuous non-Lipschitz distributions.} We show that the misbehavior of
the non-continuous distribution from the previous paragraph can be ``simulated'' by a non-Lipschitz continuous distribution. Let $z_0$ be a discontinuity point of $\Phi^*(z)$ from the last paragraph,
let $\Phi_0 \coloneqq \lim_{z \nearrow z_0} \Phi^*(z)$ and
$\eta \coloneqq \Phi^*(z_0 + 1) - \Phi_0$. Clearly $\eta > 0$.
Consider another distribution of $x_i^*$ with c.d.f.
$$
\widetilde\Phi^*(z) = \left\{
\begin{array}{ll}
\Phi^*(z) & \text{if $z < z_0$ or $z > z_0 + 1$}, \\
\Phi_0 + \eta \cdot (z-z_0)^{\varepsilon} & \text{if $z_0 \leq z \leq z_0 + 1$}
\end{array}
\right.
$$
with $\varepsilon > 0$ arbitrarily small. Now $\widetilde\Phi^*(z)$ is continuous (if there are more discontinuity points of $\Phi^*(z)$ outside $[z_0, z_0+1)$, a similar construction can be done in each of them).
If $x_i^{\Delta} = 1$ a.s. and
$U_i$ has the same meaning as in \whole\ref{eq:Ui}, we get
$$
\mathsf{E}U_i = \Pr[U_i = 1]
\geq 
\Pr[z_0 \leq x_i^* \leq z_0 + \tfrac{1}{n}]
= \eta n^{-\varepsilon},
$$
and thus
$$
\mathsf{E}\omega \geq 
\sum_{i=1}^n \mathsf{E}U_i \geq \eta n^{1 - \varepsilon}.
$$
Taking $\varepsilon$ close to zero, we get a clique with average 
size arbitrarily close to the order $n$.

\subsection{The independence assumption (\ref{ass:C}) is also essential} \label{ssect:C}
If we relax the independence assumption, we can get only a weaker estimate on $p^n_j$ than the bound 
$p^n_j = O(n^{-1})$ from \whole\ref{lem:p}. 
Said informally, we needed $p^n_j = O(n^{-1})$ in 
\whole\ref{lem:tail} to satisfy $np^n_j = O(1)$. Then, since $k_n$ grows unboundedly (although slowly), 
we were able to apply the tail bound for
$n$ sufficiently large. 

But in the dependent case we can derive only the bound 
\begin{equation}
    p^n_j = O(n^{-\frac{1}{2}}), \label{eq:bnd}
\end{equation}
resulting in $n p^n_j = O(n^{\frac{1}{2}})$. Then, $k_n$ would have to
grow faster than $n^{\frac{1}{2}}$ to be able to apply the tail bound and we would get 
\begin{equation}
    \text{$\mathsf{E}\omega \sim n^{\frac{1}{2}}$}\label{eq:bigom}
\end{equation}
or even something worse. 
Then, the average computation time bound would be as poor as $2^{\sqrt{n}}$. This is a high price for dependence. For specific extremal distributions, the situation can indeed 
be so bad, as shown in \whole\ref{sect:extr};
but for ``usual'' distributions with enough moments the situation is much better, 
as explained in \whole\ref{sect:moments}.

Let us show \whole\ref{eq:bnd} without the assumption of independence of $x^*_i$ and $x_i^\Delta$. By Markov's inequality we have 
$$
\zeta_n \coloneqq \Pr[x_i^\Delta \geq n^{\frac{1}{2+\varepsilon}}] 
\leq \gamma n^{-\frac{1+\varepsilon}{2+\varepsilon}}$$ 
similarly as in \whole\ref{eq:markoff}; recall that we have only assumed 
the existence of a finite moment of order $(1+\varepsilon)$ with value $\gamma$. 
We have
\begin{align*}
    p^n_j &= \Pr[x^*_i \in I_j] + p^+_j + p^-_j 
         \\&\le \tfrac{1}{n} + O(n^{-\frac{1}{2}}) + O(n^{-\frac{1}{2}}) 
         \\&= O(n^{-\frac{1}{2}}),
\end{align*}
as the bounds for $p^+_j$ (and $p^-_j$, similarly) can be obtained by setting $x'_j = n(x_i^* - q_j)$
and using \ref{eq:lipboundx}:
\begin{align*}
    p^+_j &= \Pr[x'_j \le x_i^\Delta\ \land\ x'_j \ge 0] 
\\     &= \Pr[x'_j \le x_i^\Delta\ \land\ x'_j \ge 0\ |\ x_i^\Delta \ge n^{\frac{1}{2+\varepsilon}}] \cdot \zeta_n
      \\&\qquad + \Pr[x'_j \le x_i^\Delta\ \land\ x'_j \ge 0\ |\ x_i^\Delta < n^{\frac{1}{2+\varepsilon}}] \cdot (1-\zeta_n)
     \\ & \le\zeta_n + \Pr[x'_j < n^{\frac{1}{2+\varepsilon}} \land x'_j \ge 0]
     \\ & \le \gamma n^{-\frac{1+\varepsilon}{2+\varepsilon}} + L n^{-1}n^{\frac{1}{2+\varepsilon}}
     \\ & = \gamma n^{-\frac{1+\varepsilon}{2+\varepsilon}} + L n^{-\frac{1+\varepsilon}{2+\varepsilon}}
     \\ & = O(n^{-\frac{1}{2}}).
\end{align*}

\subsection{An extremal distribution}\label{sect:extr} 
\hspace{-0.2cm} Unfortunately, the bounds from the previous sections cannot be generally improved. We show an example where 
\whole\ref{ass:A} and \ref{ass:B} are satisfied,
\whole\ref{ass:C} is violated 
and
a slightly weaker form of \whole\ref{eq:bigom} holds true ---
the clique is as large as $n^{\frac{1}{2} - \varepsilon}$ on average, for an arbitrarily small $\varepsilon > 0$. 
Thus we can push the average computation time of FKGLA algorithm arbitrarily close
to $2^{\sqrt n}$.

Let $x_1^*, \dots, x_n^* \sim \text{Unif}(0,1)$ independent. Then, clearly, \whole\ref{ass:A} is satisfied.
Let $0 < \varepsilon < 1$ (a choice with $\varepsilon$ close to zero is interesting).
Define
$$
x_i^{\Delta} \coloneqq (x_i^*)^{-1+\varepsilon}, \quad i = 1, \dots, n.
$$
\whole\ref{ass:B} is satisfied: indeed, the moment of order $1+\varepsilon$ is finite, since
\begin{align*}
\mathsf{E}[(x_i^{\Delta})^{1+\varepsilon}] 
& = 
\mathsf{E}[(x_i^*)^{(\varepsilon -1)(\varepsilon + 1)}]
\\ &=
\mathsf{E}[(x_i^*)^{\varepsilon^2 - 1}] 
\\ &  =
\int_0^1 x^{\varepsilon^2 - 1}\ \text{d}x \\ &= \varepsilon^{-2} < \infty,
\end{align*}
and $x_1^{\Delta}, \dots, x_n^{\Delta}$ are independent.

For $i = 1, \dots, n$ define
$$
U_i = \left\{
\begin{array}{ll}
1, & \text{if $0 \in \bx_i^{1/n}$,} \\
0  & \text{otherwise.} 
\end{array}
\right.
$$
We have 
\begin{align*}
\mathsf{E}U_i 
&= 
\Pr[U_i = 1]
=
\Pr[\tfrac{1}{n}x_i^{\Delta} \geq x_i^*]
=
\Pr[(x_i^*)^{-1+\varepsilon} \geq nx_i^*]
\\ & =
\Pr[(x_i^*)^{-2+\varepsilon} \geq  n]
 =
\Pr[x_i^* \leq n^{-\frac{1}{2-\varepsilon}}]
=
n^{-\frac{1}{2-\varepsilon}}.
\end{align*}
Obviously, $\omega \geq \sum_{i=1}^n U_i$ a.s. Thus
$$
\mathsf{E}\omega 
\geq
\sum_{i=1}^n \mathsf{E}U_i
= n^{1 - \frac{1}{2-\varepsilon}},
$$
which is close to $n^{\frac{1}{2}}$ if $\varepsilon$ is small.

\subsection{FGKLA algorithm can benefit from high-order moments: A trade-off between dependence and existence of such moments}\label{sect:moments}
Note that the problem with dependence of the input random variables sketched in \whole\ref{ssect:C} is closely related to the value of $\varepsilon$. Here we show that

if we assume the existence of high-order moments, we can push the bound on $p^n_j$ close to the ``desired'' order $O(n^{-1})$ and get good computation time of the FGKLA algorithm even in the dependent case. 
Indeed, if $\widetilde\gamma \coloneqq \mathsf{E}[(x_i^{\Delta})^d] < \infty$ for some $d$, then
Markov's inequality gives us $\zeta_n \coloneqq \Pr[x_i^{\Delta} \geq n^{\frac{1}{1+d}}] \leq \widetilde\gamma n^{-\frac{d}{1+d}}$. Now we have 
\begin{align*}
    p^+_j &= \Pr[x'_j \le x_i^\Delta\ \land\ x'_j \ge 0] 
\\     &= \Pr[x'_j \le x_i^\Delta\ \land\ x'_j \ge 0\ |\ x_i^\Delta \ge n^{\frac{1}{1+d}}] \cdot \zeta_n
      \\&\qquad + \Pr[x'_j \le x_i^\Delta\ \land\ x'_j \ge 0\ |\ x_i^\Delta < n^{\frac{1}{1+d}}] \cdot (1-\zeta_n)
     \\ & \le\zeta_n + \Pr[x'_j < n^{\frac{1}{1+d}}\ \land\ x'_j \ge 0]
     \\ & \le \widetilde\gamma n^{-\frac{d}{1+d}} + L n^{-1}n^{\frac{1}{1+d}}
     \\ & \le \widetilde\gamma n^{-\frac{d}{1+d}} + L n^{-\frac{d}{1+d}}
     \\ & = O(n^{-\frac{d}{1+d}}),
\end{align*}

which is close to $n^{-1}$ if $d$ is large.

\begin{acknowledge}

All authors were supported by the Czech Science Foundation pro-ject 22-19353S. O.~Sokol was also supported by the Internal Grant Agency (project F4/19/2019) of Faculty of Informatics and Statistics, Prague University of Economics and Business. 
\end{acknowledge}


\begin{thebibliography}{15}
\providecommand{\natexlab}[1]{#1}
\providecommand{\url}[1]{\texttt{#1}}
\providecommand{\urlprefix}{URL }
\providecommand{\selectlanguage}[1]{\relax}

\bibitem[{Amelunxen \& Lotz(2017)}]{ame}
\textsc{Dennis Amelunxen} \& \textsc{Martin Lotz} (2017).
\newblock Average-case complexity without the black swans.
\newblock \emph{Journal of Complexity} \textbf{41}, 82--101.
\newblock ISSN 0885-064X.
\newblock
  \urlprefix\url{https://www.sciencedirect.com/science/article/pii/S0885064X16301169}.

\bibitem[{Antoch \emph{et~al.}(2010)Antoch, Brzezina \&
  Miele}]{antoch:2010:notevariabilityinterval}
\textsc{Jarom{\'i}r Antoch}, \textsc{Miroslav Brzezina} \& \textsc{Rafaelle
  Miele} (2010).
\newblock A Note on Variability of Interval Data.
\newblock \emph{Computational Statistics} \textbf{25}(1), 143--153.
\newblock ISSN 0943-4062, 1613-9658.

\bibitem[{Bomze \emph{et~al.}(2017)Bomze, Schachinger \&
  Ullrich}]{bomze:2017:ComplexitySimpleModels}
\textsc{Immanuel~M. Bomze}, \textsc{Werner Schachinger} \& \textsc{Reinhard
  Ullrich} (2017).
\newblock The {{Complexity}} of {{Simple Models}}\textemdash{{A Study}} of
  {{Worst}} and {{Typical Hard Cases}} for the {{Standard Quadratic
  Optimization Problem}}.
\newblock \emph{Mathematics of OR} \textbf{43}(2), 651--674.
\newblock ISSN 0364-765X.

\bibitem[{Borgwardt(1982)}]{borgwardt:1982:Averagenumberpivot}
\textsc{K.~H. Borgwardt} (1982).
\newblock The {{Average}} Number of Pivot Steps Required by the
  {{Simplex}}-{{Method}} Is Polynomial.
\newblock \emph{Zeitschrift f{\"u}r Operations Research} \textbf{26}(1),
  157--177.
\newblock ISSN 0340-9422, 1432-5217.

\bibitem[{{\v C}ern{\'y} \&
  Hlad{\'i}k(2014)}]{cerny:2014:complexitycomputationapproximationa}
\textsc{Michal {\v C}ern{\'y}} \& \textsc{Milan Hlad{\'i}k} (2014).
\newblock The Complexity of Computation and Approximation of the T-Ratio over
  One-Dimensional Interval Data.
\newblock \emph{Computational Statistics \& Data Analysis} \textbf{80}, 26--43.
\newblock ISSN 0167-9473.

\bibitem[{Ferson \emph{et~al.}(2005)Ferson, Ginzburg, Kreinovich, Longpr{\'e}
  \& Aviles}]{ferson:2005:ExactBoundsFinitea}
\textsc{Scott Ferson}, \textsc{Lev Ginzburg}, \textsc{Vladik Kreinovich},
  \textsc{Luc Longpr{\'e}} \& \textsc{Monica Aviles} (2005).
\newblock Exact {{Bounds}} on {{Finite Populations}} of {{Interval Data}}.
\newblock \emph{Reliable Computing} \textbf{11}(3), 207--233.
\newblock ISSN 1385-3139, 1573-1340.

\bibitem[{Fountoulakis \emph{et~al.}(2015)Fountoulakis, Friedrich \&
  Hermelin}]{fountoulakis:2015:averagecasecomplexityparameterized}
\textsc{Nikolaos Fountoulakis}, \textsc{Tobias Friedrich} \& \textsc{Danny
  Hermelin} (2015).
\newblock On the Average-Case Complexity of Parameterized Clique.
\newblock \emph{Theoretical Computer Science} \textbf{576}, 18--29.
\newblock ISSN 0304-3975.

\bibitem[{Kreinovich \emph{et~al.}(1998)Kreinovich, Lakeyev, Rohn \&
  Kahl}]{kreinovich:1998:ComputationalComplexityFeasibility}
\textsc{Vladik Kreinovich}, \textsc{Anatoly Lakeyev}, \textsc{Ji{\v r}{\'i}
  Rohn} \& \textsc{Patrick Kahl} (1998).
\newblock \emph{Computational {{Complexity}} and {{Feasibility}} of {{Data
  Processing}} and {{Interval Computations}}}, volume~10 of \emph{Applied
  {{Optimization}}}.
\newblock {Springer US}, {Boston, MA}.
\newblock ISBN 978-1-4419-4785-7 978-1-4757-2793-7.

\bibitem[{Lu \& Posner(1993)}]{lu:1993:NPHardOpenShop}
\textsc{Lu~Lu} \& \textsc{Marc~E. Posner} (1993).
\newblock An {{NP}}-{{Hard Open Shop Scheduling Problem}} with {{Polynomial
  Average Time Complexity}}.
\newblock \emph{Mathematics of OR} \textbf{18}(1), 12--38.
\newblock ISSN 0364-765X.

\bibitem[{Manski(2003)}]{manski:2003:Partialidentificationprobability}
\textsc{Charles~F. Manski} (2003).
\newblock \emph{Partial Identification of Probability Distributions}.
\newblock {Springer Science \& Business Media}.

\bibitem[{Nguyen \emph{et~al.}(2012)Nguyen, Kreinovich, Wu \&
  Xiang}]{nguyen:2012:ComputingStatisticsInterval}
\textsc{Hung~T. Nguyen}, \textsc{Vladik Kreinovich}, \textsc{Berlin Wu} \&
  \textsc{Gang Xiang} (2012).
\newblock Computing Statistics under Interval and Fuzzy Uncertainty.
\newblock \emph{Studies in Computational Intelligence} \textbf{393}.

\bibitem[{Penrose(2003)}]{penrose:2003:RandomGeometricGraphs}
\textsc{Mathew Penrose} (2003).
\newblock \emph{Random {{Geometric Graphs}}}.
\newblock {Oxford University Press}.
\newblock ISBN 978-0-19-850626-3.

\bibitem[{Rohn(2006)}]{rohn:2006:Solvabilitysystemsinterval}
\textsc{J.~Rohn} (2006).
\newblock Solvability of Systems of Interval Linear Equations and Inequalities.
\newblock In \emph{Linear {{Optimization Problems}} with {{Inexact Data}}},
  35--77. {Kluwer Academic Publishers}, {Boston}.
\newblock ISBN 978-0-387-32697-9.

\bibitem[{Rossman(2014)}]{rossman:2014:MonotoneComplexitykClique}
\textsc{Benjamin Rossman} (2014).
\newblock The {{Monotone Complexity}} of K-{{Clique}} on {{Random Graphs}}.
\newblock \emph{SIAM Journal on Computing} \textbf{43}(1), 256--279.
\newblock ISSN 0097-5397, 1095-7111.

\bibitem[{Spielman \& Teng(2004)}]{spielman:2004:Smoothedanalysisalgorithms}
\textsc{Daniel~A. Spielman} \& \textsc{Shang-Hua Teng} (2004).
\newblock Smoothed Analysis of Algorithms: {{Why}} the Simplex Algorithm
  Usually Takes Polynomial Time.
\newblock \emph{Journal of the ACM} \textbf{51}(3), 385--463.
\newblock ISSN 0004-5411.

\end{thebibliography}
\end{document}